\numberwithin{equation}{section}
\newtheorem{letterthm}{Theorem}
\newtheorem{thm}{Theorem}[section]
\newtheorem{lem}[thm]{Lemma}
\newtheorem{prop}[thm]{Proposition}
\theoremstyle{definition}
\newtheorem*{exams}{Examples}
\newtheorem{df}[thm]{Definition}
\newtheorem*{definition}{Definition}
\newtheorem*{newclaim}{Claim}
\newcommand{\R}{\mathbf{R}}
\newcommand{\C}{\mathbf{C}}
\newcommand{\Z}{\mathbf{Z}}
\newcommand{\F}{\mathbf{F}}
\newcommand{\T}{\mathbf{T}}
\newcommand{\N}{\mathbf{N}}
\newcommand{\B}{\mathbf{B}}
\newcommand{\K}{\mathbf{K}}
\newcommand{\cI}{\mathcal{I}}
\newcommand{\id}{\text{\rm id}}
\newcommand{\rL}{\mathord{\text{\rm L}}}
\newcommand{\ran}{\mathord{\text{\rm ran}}}
\newcommand{\Vect}{\mathord{\text{\rm Vect}}}
\newcommand{\rd}{\: \mathrm{d}}
\begin{document}

\title[Evanescent isometric actions and WIE groups]{Evanescent affine isometric actions and weak identity excluding groups}

\address{CNRS, ENS Lyon, Unit\'e de Math\'ematiques Pures et Appliqu\'ees, 46, all\'ee d'Italie, 69364 Lyon}

\author{Amine Marrakchi}
\email{amine.marrakchi@ens-lyon.fr}


\subjclass[2010]{22D10, 46L10, 20F65}

\keywords{Identity excluding; irreducible; evanescent; finite; von neumann algebra; cohomology; representation; affine action}

\begin{abstract}
We investiguate a property of affine isometric actions on Hilbert spaces called evanescence. Evanescent actions are the extreme opposite of irreducible actions. Every affine isometric action decomposes naturally into an evanescent part and an irreducible part. We study when this decomposition is unique. We also study when an action that has almost fixed points is automatically evanescent. We relate these questions to the identity excluding property for groups. We also relate them to the finiteness of the von Neumann algebras generated by the linear part of the action.
\end{abstract}

\maketitle






\section{Introduction}
The study of affine isometric actions of locally compact groups on Hilbert spaces is nowadays an important topic in representation theory and geometric group theory (see \cite{BHV08} and the references therein). In \cite{BPV16}, \emph{irreducible} affine isometric actions were systematically studied. These are the actions that do not admit any non-trivial invariant affine subspace. In this paper, we study \emph{evanescent} affine isometric actions. These are the actions that admit \emph{arbitrarily small} invariant affine subspaces. Thus, evanescent actions are the complete opposite of irreducible ones. We refer to Section \ref{evanescent section} for the precise definitions.

The property of evanescence was introduced in \cite{AIM19}, with motivations coming from ergodic theory (see also \cite[Proposition 1.7]{DZ21} for a very recent application). In \cite{AIM19}, it was observed that every affine isometric action decomposes naturally into an irreducible part and an evanescent part and that every evanescent action has \emph{almost fixed points}. With these two facts in mind, it is natural to ask the following questions :
\begin{itemize}
\item[(Q1)] Is an action with almost fixed points necessarily evanescent?
\item[(Q2)] Is the decomposition of a given action into an irreducible part and an evanescent part unique?
\end{itemize}
The goal of this paper is to study these two questions. Our first main result identifies a class of groups for which we have a very satisfactory answer for both (Q1) and (Q2). This is class of Weak Identity Excluding groups.

\begin{definition}
We say that a locally compact group $G$ is \emph{Weak Identity Excluding} (WIE) if every irreducible representation of $G$ that has almost invariant vectors is trivial.
\end{definition}
This property is a very natural weakening of the \emph{Identity Excluding} property introduced in \cite{JRW96} (see also \cite{LW95}). For discrete groups, the two notions coincide.

\begin{exams}
The following groups are WIE :
\begin{itemize}
\item CCR groups (See \cite{BH19}), such as connected semisimple Lie groups or reductive algebraic groups over local fields.
\item Nilpotent groups (see \cite{LW95}).
\item Property (T) groups (by definition).
\end{itemize}
An example of a group that is not WIE is the free group $\F_n, \: n \geq 2$. It is not known whether every \emph{finitely generated} WIE group is either virtually nilpotent or has property (T) (see also \cite{Ja04}).
\end{exams}

Here is our main result.

\begin{letterthm} \label{equivalence WIE}
Let $G$ be a locally compact group. Then the following are equivalent :
\begin{enumerate}[\rm (i)]
\item $G$ is WIE.
\item Every affine isometric action of $G$ that has almost fixed points is evanescent.
\item Every affine isometric action of $G$ has a unique decomposition into an irreducible part and an evanescent part.
\end{enumerate}
\end{letterthm}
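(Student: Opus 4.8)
The plan is to prove the cycle of implications $(i)\Rightarrow(ii)\Rightarrow(iii)\Rightarrow(i)$. Throughout I write an affine isometric action as a pair $\sigma=(\pi,b)$ (linear part, cocycle), recall that $\sigma$ has almost fixed points exactly when $b$ lies in the closure $\overline{B^1(G,\pi)}$ of the coboundaries (uniform convergence on compacta), and freely use the two facts quoted from \cite{AIM19}: every $\sigma$ splits as $\sigma^e\oplus\sigma^i$ with $\sigma^e$ evanescent and $\sigma^i$ irreducible, and every evanescent action has almost fixed points.

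$(i)\Rightarrow(ii)$ is the analytic core. Let $\sigma=(\pi,b)$ have almost fixed points. On the subspace $\cH^\pi$ of $\pi$-fixed vectors the cocycle is a homomorphism into $(\cH^\pi,+)$, which must vanish since $b\in\overline{B^1(G,\pi)}$; the action is trivial there (hence evanescent), so we may assume $\pi$ has no nonzero invariant vectors. If $b\in B^1(G,\pi)$ the action has a fixed point and is evanescent, so assume $b\notin B^1$. Disintegrate $\pi=\int^{\oplus}_Z\pi_z\,d\mu(z)$ into irreducibles and, compatibly, $b=\int^{\oplus}_Z b_z\,d\mu(z)$ with $b_z\in Z^1(G,\pi_z)$; since $\pi$ has no invariant vectors, $\pi_z$ is a nontrivial irreducible representation for a.e.\ $z$, hence by WIE has no almost invariant vectors, hence $\partial_{\pi_z}$ is bounded below and $\overline{B^1(G,\pi_z)}=B^1(G,\pi_z)$. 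As $b\in\overline{B^1(G,\pi)}$ forces $b_z\in\overline{B^1(G,\pi_z)}$ a.e., we get $b_z=\partial_{\pi_z}\xi_z$ for a measurably chosen field $(\xi_z)$, and $\int\|\xi_z\|^2\,d\mu=\infty$ (otherwise $b$ would be a coboundary). For $N>0$ put $\cK_N:=\int^{\oplus}_{\{\|\xi_z\|>N\}}\pi_z\,d\mu$ and $\xi^{(N)}:=\int^{\oplus}_{\{\|\xi_z\|\le N\}}\xi_z\,d\mu\in\cH$; then $b-\partial\xi^{(N)}$ is valued in $\cK_N$, so $\xi^{(N)}+\cK_N$ is a $G$‑invariant affine subspace, while $P_{\cK_N}\to0$ strongly since $\mu(\{\|\xi_z\|>N\})\to0$. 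This produces invariant affine subspaces with linear parts shrinking to zero, i.e.\ $\sigma$ is evanescent (see Section~\ref{evanescent section}).

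$(ii)\Rightarrow(iii)$: the point is that ``$\sigma$ has almost fixed points on the $G$‑invariant affine subspace $\mathcal{C}$'' is stable under joins — if $\mathcal{C}_1,\mathcal{C}_2$ are such, then the closed affine span $\mathcal{C}_1\vee\mathcal{C}_2$ is $G$‑invariant, and, centering at a point of $\mathcal{C}_1$, its cocycle is the cocycle of $\sigma|_{\mathcal{C}_1}$, which lies in $\overline{B^1(G,\pi|_{\cK_1})}\subseteq\overline{B^1(G,\pi|_{\mathcal L})}$ for $\cK_1\subseteq\mathcal L$ the corresponding linear parts — so there is a largest invariant affine subspace $\mathcal{C}_{\mathrm{afp}}$ carrying almost fixed points, and it is unique. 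By $(ii)$, ``almost fixed points'' coincides with ``evanescent'' for all these restricted (and subquotient) actions, so the evanescent part of any decomposition is pinned down, up to isomorphism, as the action associated with $\mathcal{C}_{\mathrm{afp}}$, and the irreducible part is then determined as well; hence the decomposition is unique.

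$(iii)\Rightarrow(i)$: argue the contrapositive. If $G$ is not WIE pick a nontrivial irreducible $\rho$ with almost invariant vectors; then $\partial_\rho$ is not bounded below, $B^1(G,\rho)$ is not closed, and one may fix $b\in\overline{B^1(G,\rho)}\setminus B^1(G,\rho)$. The action $(\rho,b)$ is irreducible, has almost fixed points, but is not evanescent; amalgamating it with a sequence of coboundary approximations $\partial\zeta_n\to b$ along an auxiliary copy of $\rho^{\oplus\infty}$ yields an action whose ``irreducible part $(\rho,b)$'' can be separated in more than one way, i.e.\ whose evanescent part is not canonical — giving two inequivalent decompositions. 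Checking that the two decompositions genuinely differ is the delicate step here. I expect the main obstacle of the whole argument, however, to be $(i)\Rightarrow(ii)$ for groups that are not of type $\mathrm{I}$ (e.g.\ the discrete Heisenberg group, which is WIE): there the disintegration into irreducibles is unavailable, and one must instead use the central decomposition of $\pi(G)\dpr$ into factors and reformulate WIE accordingly — presumably through the finiteness of the von Neumann algebra generated by the linear part mentioned in the abstract, replacing ``$\pi_z$ has no almost invariant vectors'' by a statement about the associated factor — together with a measurable selection of the implementing vectors $\xi_z$.
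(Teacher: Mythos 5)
Your step $(\mathrm{i})\Rightarrow(\mathrm{ii})$ is essentially the paper's argument (disintegrate $\pi$ into irreducibles, use WIE to make each $B^1(G,\pi_z)$ closed, solve $b_z=\partial\xi_z$ measurably, and truncate over $\{\|\xi_z\|\le N\}$ to get projections in $\pi(G)'$ increasing to $1$ whose projected cocycles are coboundaries), and the worry you raise at the end is unfounded: a direct integral decomposition into \emph{irreducible} representations exists for every separable representation of a second countable group (take a maximal abelian subalgebra of $\pi(G)'$); type I only affects its uniqueness, which is irrelevant here. The genuine gaps are in the other two implications. For $(\mathrm{ii})\Rightarrow(\mathrm{iii})$, your identification of ``the evanescent part of any decomposition'' with the restriction to the largest invariant affine subspace $\mathcal{C}_{\mathrm{afp}}$ does not work: a decomposition is a projection $q\in\pi(G)'$ with $q(\alpha)$ evanescent and $q^{\perp}(\alpha)$ irreducible, and the summand $q(\alpha)$ is a \emph{projected} action, not the restriction to an invariant affine subspace. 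An invariant affine subspace with direction $qH$ exists precisely when $q^{\perp}c\in B^1$, which is impossible when $q^{\perp}(\alpha)$ is irreducible and nonzero; so for every nontrivial decomposition neither summand is realized inside $\mathcal{C}_{\mathrm{afp}}$, and your join-stability lemma (which is fine as far as it goes) gives no handle on $q$. Moreover (iii) asserts uniqueness of the projection $q$, not uniqueness of the summands up to isomorphism. The paper's route is different and shorter: (ii) gives $\mathcal{E}(G,\pi)=\overline{B^1(G,\pi)}$, which is visibly a $\pi(G)'$-submodule of $Z^1(G,\pi)$, and the module property forces $q$ to equal the canonical projection $p$ with $\overline{\mathcal{I}(c)}=\pi(G)'p$ (if $r=pq^{\perp}\neq 0$ then $rc$ is evanescent, contradicting irreducibility of $q^{\perp}(\alpha)$).

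For $(\mathrm{iii})\Rightarrow(\mathrm{i})$ your sketch stops exactly where the content lies. Taking $b\in\overline{B^1(G,\rho)}\setminus B^1(G,\rho)$ and forming a direct sum of $(\rho,b)$ with an evanescent action built from coboundary approximations $\partial\zeta_n$ will typically \emph{not} break uniqueness: for a cocycle of the form $b\otimes\delta_0+\sum_n\partial\zeta_n\otimes\delta_n$ on $\rho\otimes\mathrm{id}$, any operator $\mathrm{id}\otimes S$ sending it into $B^1$ essentially has to annihilate the $\delta_0$-direction, so the reduction ideal has closure $\pi(G)'(1-e_{00})$-type and the natural decomposition is the only one; you never get two distinct admissible projections this way. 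The paper's Theorem~\ref{not unique decomposition} has to entangle the singular direction with the approximating coboundaries: it takes $T=\Delta_\mu^{1/2}$, a vector $\xi=\sum_n nTe_n\otimes\delta_n$, and the cocycle $c=U\xi$, for which $(\mathrm{id}\otimes S)c\in B^1$ iff $\sum_n n^2\|S\delta_n\|^2<+\infty$; evanescence comes from coordinate projections, while a second decomposition comes from the rank-one projection onto $v=\sum_{n>0}n^{-1}\delta_n$ (and two irreducible pieces from Fourier-support projections), none of which are aligned with the coordinate axes. That quantitative criterion and the choice of non-coordinate projections are the missing ingredients; ``checking that the two decompositions genuinely differ'' is not a deferred detail but the theorem itself.
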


The proof of the implication $(\rm iii) \Rightarrow (\rm i)$ in Theorem \ref{equivalence WIE} is deduced from the following more precise theorem which exhibits some wild behaviour of affine isometric actions of groups that are not WIE.

\begin{letterthm} \label{not unique decomposition}
Let $G$ be a locally compact group. Suppose that $G$ is not WIE and let $\pi$ be a non-trivial irreducible representation of $G$ that has almost invariant vectors. Then there exists an affine isometric action $\alpha$ of $G$ such that :
\begin{enumerate}[\rm (i)]
\item The linear part of $\alpha$ is a multiple of $\pi$.
\item $\alpha$ is evanescent.
\item $\alpha$ admits a decomposition into a sum of an evanescent action and a nontrivial irreducible action.
\item $\alpha$ admits a decomposition into a sum of two irreducible  actions.
\end{enumerate}
\end{letterthm}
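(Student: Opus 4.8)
The plan is to produce $\alpha$ by hand from a carefully chosen family of almost invariant vectors of $\pi$, after recasting everything in the language of $1$-cocycles. Since $G$ is not WIE, fix a nontrivial irreducible representation $\pi$ of $G$ with almost invariant vectors. Being irreducible and nontrivial, $\pi$ has no nonzero invariant vectors, and having almost invariant vectors the space $B^1(G,\pi)$ of coboundaries is \emph{not} closed in $Z^1(G,\pi)$; so fix $b_0\in\overline{B^1(G,\pi)}\setminus B^1(G,\pi)$ together with approximating primitives $\zeta_n\in\mathcal H_\pi$ with $\partial\zeta_n\to b_0$ uniformly on compact sets, where $\partial\zeta(g)=\zeta-\pi_g\zeta$. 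A weak‑compactness argument (any weak limit point of the $\zeta_n$ would be $\pi$‑invariant, hence $0$, which would force $b_0\in B^1$) shows $\|\zeta_n\|\to\infty$; passing to a subsequence we may in addition assume that $\partial\zeta_n\to b_0$ summably fast over an exhaustion of $G$ by compacts, that $\|\zeta_n\|$ grows as fast as we wish, and that the unit vectors $\xi_n:=\zeta_n/\|\zeta_n\|$ are almost orthonormal. I will also use the following reformulations, essentially immediate from the definitions: an affine isometric action with linear part $\sigma$ is irreducible iff it has no proper invariant affine subspace, and when $\sigma$ is a multiple $\pi^{(\kappa)}$ of $\pi$, realised on $\mathcal H_\pi\otimes\mathcal M$ for a multiplicity Hilbert space $\mathcal M$ (so that $\sigma$‑invariant subspaces are exactly the $\mathcal H_\pi\otimes M$), this means precisely that the cocycle $b$ is not a coboundary and, for every nonzero closed subspace $M\subseteq\mathcal M$, the compression $(1\otimes P_M)b$ is not a coboundary; the action is evanescent iff it admits invariant affine subspaces whose direction spaces are arbitrarily small (their projections tend to $0$ strongly); and any orthogonal splitting $\mathcal M=M_1\oplus M_2$ induces a decomposition $\alpha=\alpha_1\oplus\alpha_2$ by compressing $b$.

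With this dictionary, the construction lives on $\mathcal H:=\mathcal H_\pi\otimes\mathcal M$ for a suitable separable $\mathcal M$, with linear part $1\otimes\pi\cong\pi^{(\infty)}$ (so (i) is automatic), and $b$ is built as an explicit convergent sum $\sum_n c_n\,(\partial\zeta_n)\otimes v_n$ of the coboundaries $\partial\zeta_n$ tensored against carefully chosen vectors $v_n\in\mathcal M$ with divergent weights $c_n\to\infty$, arranged so that $b(g)$ is square‑summable in $\mathcal H$ but is not a coboundary. Using the \emph{approximate} primitives of the genuinely nontrivial class $b_0$, rather than genuine coboundaries, is the key device: it injects a real cohomological obstruction into $\alpha$ while keeping every \emph{finite} truncation of $b$ a coboundary. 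The design goals for the $v_n$ and $c_n$ are: (a) deleting all but finitely many coordinates of $\mathcal M$ leaves a coboundary, so $\alpha$ has invariant affine subspaces with arbitrarily small direction spaces, hence is evanescent — this gives (ii); (b) there is an orthogonal splitting $\mathcal M=M_0\oplus M_0^{\perp}$ for which the compression of $b$ to $\mathcal H_\pi\otimes M_0$ is nowhere a coboundary — so $\alpha$ restricted there is a nontrivial irreducible action — while its compression to $\mathcal H_\pi\otimes M_0^{\perp}$ is still evanescent; this gives (iii); (c) there is a \emph{different} orthogonal splitting $\mathcal M=M_1'\oplus M_2'$, obtained from a unitary of $\mathcal M$ that genuinely mixes the coordinates, for which \emph{both} compressions are nowhere a coboundary; this gives (iv).

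The verifications then split in two. Evanescence statements are checked directly by exhibiting the shrinking families of invariant affine subspaces (a truncation of $b$ to finitely many coordinates is visibly $\partial$ of a finite sum, and its complementary direction space tends strongly to $0$). Irreducibility statements reduce, via the reformulation above, to showing that the relevant compressed cocycles — of the form $\sum_n\overline{f_n}\,c_n\,\partial\zeta_n$ with $f$ ranging over a closed subspace of $\mathcal M$ — are not coboundaries; this is where the quantitative choices matter, since such a cocycle is a coboundary essentially only when $\sum_n\overline{f_n}c_n\zeta_n$ converges in $\mathcal H_\pi$, and the fast growth of $\|\zeta_n\|$, the divergence of the $c_n$, and the almost orthonormality of the $\xi_n$ let one control this and defeat it for the relevant $f$. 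As a byproduct the construction produces affine actions with linear part $\pi^{(\infty)}$ that are irreducible; combined with the observation that a nontrivial irreducible action cannot be evanescent (it has \emph{no} proper invariant affine subspace), the existence of the non‑unique decompositions of $\alpha$ above is exactly what feeds the implication $(\mathrm{iii})\Rightarrow(\mathrm i)$ of Theorem~\ref{equivalence WIE}.

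The main obstacle is precisely the reconciliation of (a)–(c): a cohomological obstruction sitting \emph{rigidly} in an action (say on a single coordinate) immediately prevents evanescence, since one can never shrink away the direction carrying it; yet (iii) and (iv) require genuine, non‑evanescent irreducible summands, i.e. obstructions that \emph{are} present. The resolution — and the delicate heart of the proof — is that $b_0$ must enter $\alpha$ \emph{diffusely}, as a limit of coboundaries smeared over infinitely many coordinates with weights $c_n\to\infty$, so that each finite sub‑collection of coordinates is removable (forcing $\alpha$ evanescent) while infinitely many suitably weighted coordinate combinations still fail to be coboundaries (producing irreducible summands). Getting two genuinely inequivalent such decompositions — one ``evanescent $\oplus$ irreducible'', one ``irreducible $\oplus$ irreducible'' — to coexist on the same diffuse cocycle is the combinatorial/functional‑analytic crux, and is where I expect the argument to need the most care in choosing $\mathcal M$, the vectors $v_n$, the weights $c_n$, and the two unitary slicings of $\mathcal M$.
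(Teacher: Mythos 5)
Your plan reproduces the architecture of the actual construction (linear part an infinite multiple $\pi\otimes\id$ on $\mathcal H_\pi\otimes\ell^2_\K(\Z)$, a cocycle smeared over the coordinates as a sum of coboundaries of almost invariant vectors with divergent weights, finite truncations remaining coboundaries so that evanescence holds), but the two steps that carry all the mathematical content are missing, and one of them rests on an unjustified claim. First, your criterion ``the compressed cocycle $\sum_n\overline{f_n}c_n\,\partial\zeta_n$ is a coboundary essentially only when $\sum_n\overline{f_n}c_n\zeta_n$ converges'' is not something you can assume: since $\pi$ has almost invariant vectors, $\partial$ is injective but not bounded below, so a limit of coboundaries can be a coboundary whose primitive has nothing to do with the approximating partial sums; the ``only if'' direction is exactly the delicate point. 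The paper secures it by \emph{not} taking the $\zeta_n$ to be approximate primitives of a fixed class $b_0$, but rather an orthonormal family $(e_n)$ chosen inside the domain of $\Delta_\mu^{-1/2}$ (spectral calculus of the Laplacian $\Delta_\mu=\partial_\mu^*\partial_\mu$, with $0$ in its spectrum because $\pi$ has almost invariant vectors), with $\sum_n n^2\|\Delta_\mu^{1/2}e_n\|^2<\infty$. Then the polar decomposition $\partial_\mu=U\Delta_\mu^{1/2}$ and the identity ``$U\xi\in B^1$ iff $\xi\in\ran\Delta_\mu^{1/2}$'' yield an \emph{exact} criterion: for $c=\sum_n n\,(\partial e_n)\otimes\delta_n$ and any $S\in\B(\ell^2_\K(\Z))$, the compression $(\id\otimes S)c$ is a coboundary if and only if $\sum_n n^2\|S\delta_n\|^2<\infty$; the proof of the ``only if'' half pairs against the vectors $\Delta_\mu^{-1/2}e_i\otimes\delta_j$ and uses orthonormality of the $e_i$, which your merely ``almost orthonormal'' normalized primitives do not provide. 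Without such an exact characterization your later verifications have nothing quantitative to stand on.

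Second, and more seriously, items (iii) and (iv) are stated as design goals, not proved; you yourself flag (iv) as ``the crux'' left to be worked out. This is precisely where the real idea lies. With the criterion above, (iii) follows by compressing onto the rank-one projection along $v=\sum_{n>0}\frac1n\delta_n$ (the weighted sum $\sum_n n^2\|P\delta_n\|^2$ diverges, so that piece is a fixed-point-free action with irreducible linear part $\pi$, hence irreducible, while $v^\perp$ has an orthonormal basis of finitely supported vectors, giving evanescence of the complement). For (iv) the paper passes to the Fourier transform $\ell^2_\K(\Z)\to\rL^2(\T)$ and splits $\T=A_1\sqcup A_2$ with each $A_i$ meeting every nonempty open set in positive measure: if a nonzero projection $Q$ under either piece compressed $c$ to a coboundary, every $f$ in its range would satisfy $\sum_n n^2|f(n)|^2<\infty$, forcing $\widehat f$ to be continuous, and continuity plus vanishing a.e.\ on $A_i$ forces $f=0$. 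Producing two genuinely different splittings with these properties is exactly what your proposal defers, so as it stands the argument establishes (i) and (ii) at best, and the heart of Theorem~\ref{not unique decomposition} remains unproved.
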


Finally, we can also answer (Q1) and (Q2) by adding an assumption on the von Neumann algebra generated by the linear part of the isometric aciton, and without any assumption on the group.
\begin{letterthm} \label{main finite}
Let $G$ be a locally compact group. Let $\alpha$ be an affine isometric action with linear part $\pi$.
\begin{enumerate}[\rm (i)]
\item If $\pi(G)''$ is a finite von Neumann algebra then $\alpha$ is evanescent if and only if it has almost fixed points.
\item If $\pi(G)'$ or $\pi(G)''$ is a finite von Neumann algebra, then $\alpha$ has a unique decomposition into an irreducible part and an evanescent part.
\end{enumerate}
\end{letterthm}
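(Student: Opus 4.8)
The plan is to carry everything on the level of the cocycle data $\alpha=(\pi,b)$, $b\in Z^1(G,\pi)$, using three reformulations that sit naturally in the framework built in the earlier sections. First, $\alpha$ has almost fixed points iff $b$ lies in $\overline{B^1(G,\pi)}$, the closure of the coboundaries in the topology of uniform convergence on compacta. Second, a cocycle is a coboundary iff it is bounded (take the circumcenter of the closed convex hull of the orbit of $0$); consequently the nonempty closed $G$-invariant affine subspaces of $\alpha$ are exactly the sets $\zeta+V$ with $V$ a $\pi$-invariant closed subspace, i.e. $P_V\in\pi(G)'$, and $P_{V^\perp}b$ bounded. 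Third, $\alpha$ is evanescent iff there is a net $(V_i)$ of $\pi$-invariant closed subspaces with $P_{V_i}\to 0$ strongly and $P_{V_i^\perp}b$ bounded for each $i$ (equivalently, $b$ is cohomologous to a net of cocycles each valued in a $\pi$-invariant subspace with strongly vanishing projection). To these I would add the elementary dichotomy: if $b\in\overline{B^1}\setminus B^1$, then writing $b=\lim_n(-\partial\eta_n)$ and comparing the norm limit with the weak limit of a bounded subnet forces $\|\eta_n\|\to\infty$, so the unit vectors $\xi_n:=\eta_n/\|\eta_n\|$ are almost invariant vectors for $\pi$ — in particular a representation with no almost invariant vectors has $B^1$ closed, with no compact-generation hypothesis.

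For (i), the implication ``evanescent $\Rightarrow$ almost fixed points'' holds for every group, so I would assume $M:=\pi(G)''$ finite and $b\in\overline{B^1}$. If $b\in B^1$ then $\alpha$ has a fixed point and is evanescent via $V=0$; otherwise $\pi$ has almost invariant vectors by the dichotomy. The core task is then to produce, for every finite $F\subseteq H$ and $\varepsilon>0$, a $\pi$-invariant closed subspace $V$ with $\|P_V\zeta\|<\varepsilon$ for $\zeta\in F$ and with $P_{V^\perp}b$ bounded, and then to apply the third reformulation. This is where the trace $\tau$ on $M$ must be used: reducing via the center-valued trace to the case of a finite factor, one uses $\tau$ to find a $\pi$-invariant projection $p\in M'$ that is small (tracially, hence against $F$) but still captures enough of the ``unbounded direction'' of $b$ to make $(1-p)b$ bounded. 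I expect this construction — confining the obstruction carried by the almost invariant vectors inside invariant subspaces of arbitrarily small trace — to be the main obstacle; it is the one step where finiteness of $\pi(G)''$ is genuinely used, and the mechanism must exploit the boundedness of $P_{V^\perp}b$ directly, since one cannot in general choose $V$ so that $\pi|_{V^\perp}$ has no almost invariant vectors at all (the regular representation of an amenable i.c.c.\ group already illustrates this).

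For (ii), the goal is to describe the evanescent part of $\alpha$ intrinsically, independently of any chosen decomposition, and to show that its orthogonal complement carries a direct sum of irreducible affine actions. If $\pi(G)''$ is finite, I would combine (i) with the remark that finiteness of $\pi(G)''$ is inherited by the linear parts of all sub- and quotient-actions: then ``evanescent $=$ almost fixed points'' holds for every such sub-action, so the evanescent part is forced to be the largest $G$-invariant part of $\alpha$ with almost fixed points, and its complement contains no nonzero invariant sub-action with almost fixed points, hence none that is evanescent, whence it is a sum of irreducibles by the structure theory of the preceding sections. If instead $\pi(G)'$ is finite, I would use that the lattice of $\pi$-invariant closed subspaces — the projection lattice of $\pi(G)'$ — is then modular, modularity of the projection lattice being precisely the characterization of finiteness of a von Neumann algebra; a Jordan–Hölder / Krull–Schmidt type argument for this modular lattice, carried through in the presence of the cocycle, then yields uniqueness. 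The technical difficulty in this second case is that the lattice need not be complemented and the irreducible part may be an infinite direct sum, so the usual finite-length arguments must be replaced by a ``continuous'' version.
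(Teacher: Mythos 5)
Your general frame is sound (the reformulation of evanescence via arbitrarily small invariant affine subspaces, the coboundary/bounded-cocycle dichotomy, the reduction of (i) to producing small $\pi$-invariant $V$ with $P_{V^\perp}b$ bounded, and the heredity argument deducing the $\pi(G)''$-finite case of (ii) from (i)), but the heart of (i) is precisely the step you leave open, and the mechanism you sketch for it does not make sense as stated: the projections you must produce are $\pi$-invariant projections in $\pi(G)'$, whereas the trace furnished by the finiteness hypothesis lives on $M=\pi(G)''$; the commutant $M'$ need not be finite, so ``tracially small $p\in M'$'' is undefined, and passing to a finite factor via the center-valued trace does not repair this. The actual argument of the paper runs differently: one chooses an adapted measure $\mu$ with $c\in\overline{B}^1_\mu(G,\pi)$, uses the polar decomposition $\partial_\mu=U_\mu\Delta_\mu^{1/2}$ of the coboundary operator on the Hilbert space $Z^1_\mu(G,\pi)$ (Proposition \ref{polar decomposition}), so that $c=U_\mu\xi$ and, for a projection $P\in\pi(G)'$, $Pc\in B^1(G,\pi)$ if and only if $P\xi\in\ran(\Delta_\mu^{1/2})$; finiteness of $M$ then enters through Lemma \ref{approximation finite}: for an injective $T\in M$ with $M$ finite and any vector $\xi$, the left ideal $\{S\in M'\mid S\xi\in\ran(T)\}$ is dense in $M'$, proved by writing $H\cong\bigoplus_n \rL^2(M,\tau)q_n$ and inverting $T$ in the algebra $\mathcal{A}(M)$ of measurable operators. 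Without this ingredient (or a substitute), your proof of (i) is missing its only nontrivial step, and with it the $\pi(G)''$-finite half of (ii), which you make conditional on (i).

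The second genuine gap is the case where $\pi(G)'$ is finite in (ii). Invoking modularity of the projection lattice and a ``continuous Krull--Schmidt'' is only a heuristic: you acknowledge that finite-length arguments do not apply, you do not explain how such a lattice argument interacts with the cocycle, and Theorem \ref{not unique decomposition} shows uniqueness genuinely fails without finiteness, so the trace must be used substantively rather than through lattice modularity alone. The paper's route is concrete: it proves that the set $\mathcal{E}(G,\pi)$ of evanescent cocycles is a $\pi(G)'$-module (Proposition \ref{finite implies module}), using that in a finite von Neumann algebra the intersection of two dense left ideals is dense (Lemma \ref{dense ideal intersection}, via $\tau(p)+\tau(q)=\tau(p\vee q)+\tau(p\wedge q)$) and that $\{x\mid xa\in I\}$ is dense for any dense left ideal $I$ and $a\in M$ (Lemma \ref{dense ideal finite}, again via measurable operators); uniqueness then follows from Proposition \ref{module unique decomposition}: if $q(\alpha)$ is evanescent and $q^\perp(\alpha)$ irreducible, then $q\leq p$ where $\overline{\mathcal I(c)}=\pi(G)'p$, and the module property forces $(p-q)(\alpha)$ to be simultaneously evanescent and irreducible, hence $p=q$. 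Your outline for (ii) when $\pi(G)''$ is finite is essentially this same maximality argument (there $\mathcal{E}(G,\pi)=\overline{B^1(G,\pi)}$ is automatically a module), so that part is fine modulo (i); but as written, both the key construction in (i) and the $\pi(G)'$-finite case of (ii) remain unproved.
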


It is known that every affine isometric action of an amenable group whose linear part is contained in a multiple of the left regular representation has almost fixed points. Recently, Bekka and Valette proved that such an action is in fact evanescent. We note that $\pi(G)''$ is always finite when $G$ is discrete and $\pi$ is contained in a multiple of the left regular representation of $G$. Thus, Theorem \ref{main finite}.(i) is a generalization of the result of Bekka and Valette.

\subsection*{Conventions}
In this paper, all Hilbert spaces are assumed to be \emph{separable} and all locally compact groups are assumed to be \emph{second countable.} 

It is often more natural to look at affine isometric actions of groups on real Hilbert spaces. For this reason, all results in this paper will deal with both the real case and the complex case. In particular, the main results of this paper are valid both in the real and the complex context.

\section{Preliminaries on representation theory}

\subsection{Unitary and orthogonal representations}
A \emph{unitary representation} of a locally compact group $G$ is a continuous homomorphism $\pi : G \rightarrow U(H)$ where $U(H)$ is the unitary group of some complex Hilbert space $H$, equipped with the topology of pointwise norm convergence.

An \emph{orthogonal representation} of a locally compact group $G$ is a continuous homomorphism $\pi : G \rightarrow O(H)$ where $O(H)$ is the orthogonal group of some real Hilbert space $H$, equipped with the topology of pointwise norm convergence.

When we simply use the word \emph{representation} without specification, it will mean that the representation can be both unitary or orthogonal. Similarly, a \emph{Hilbert space} can be both a complex Hilbert space or a real Hilbert space.

If $\rho : G \curvearrowright H$ is an orthogonal representation, we denote  by $\rho_\C : G \curvearrowright H_\C$ the associated unitary representation where $H_\C=H \otimes_\R \C$ is the complexification of $H$. Observe that $(\rho_\C)_\R \cong \rho \oplus \rho$.

 If $\pi : G \curvearrowright H$ is a unitary representation, we denote by $\pi_\R : G \curvearrowright H_\R$ the orthogonal asociated representation where $H_\R$ is the real Hilbert space obtained from $H$ by restricting the scalars. We denote by $\pi^*$ the conjugate representation of $\pi$ on $\overline{H}$ where $\overline{H}$ is the conjugate Hilbert space of $H$, i.e.\ we have a conjugate linear isometry $\xi \mapsto \bar{\xi}$ from $H$ to $\overline{H}$. Observe that $H \oplus \overline{H}$ is naturally isomorphic to $H_\R \otimes_\R \C$ as a complex Hilbert space via the map
$$ (\xi, \bar{\eta}) \mapsto (\xi + \eta) \otimes_\R 1 + (i\xi-i\eta) \otimes_\R i$$
hence $(\pi_\R)_\C \cong \pi \oplus \pi^*$.

\begin{prop} \label{irreducible orthogonal}
Let $G$ be a locally compact group. Let $\rho : G \curvearrowright H$ be an irreducible orthogonal representation of $G$. The following are equivalent :
\begin{enumerate}[\rm (i)]
\item $\rho(G)' \neq \R$.
\item $\rho_\C$ is not irreducible.
\item $\rho=\pi_\R$ for some irreducible unitary representaiton $\pi$.
\end{enumerate}
\end{prop}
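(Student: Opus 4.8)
The plan is to exploit the structure of $\rho(G)'$, the commutant of an irreducible orthogonal representation, which is a real division algebra acting on $H$, hence (by a real analogue of Schur's lemma) isomorphic to one of $\R$, $\C$, or $\mathbb{H}$. The three conditions should be organized around understanding what happens upon complexifying.

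\medskip

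\noindent\emph{Step 1: $(\mathrm{ii}) \Leftrightarrow (\mathrm{i})$ via the commutant of the complexification.} First I would observe that $(\rho_\C)(G)'$ is the complexification of $\rho(G)'$: writing $A = \rho(G)' \subseteq B(H)$ (the real commutant), one has $(\rho_\C)(G)' = A \otimes_\R \C$ inside $B(H_\C) = B(H) \otimes_\R \C$. Since $\rho$ is irreducible, $A$ is a real division algebra, so $A \in \{\R, \C, \mathbb{H}\}$. Now $\rho_\C$ is irreducible iff $(\rho_\C)(G)' = \C \cdot \mathrm{id}$, i.e.\ iff $A \otimes_\R \C \cong \C$, which forces $A = \R$ (since $\C \otimes_\R \C \cong \C \oplus \C$ and $\mathbb{H} \otimes_\R \C \cong M_2(\C)$ are strictly larger). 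Hence $\rho_\C$ reducible $\Leftrightarrow$ $A \neq \R$, which is exactly $(\mathrm{i}) \Leftrightarrow (\mathrm{ii})$.

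\medskip

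\noindent\emph{Step 2: $(\mathrm{iii}) \Rightarrow (\mathrm{i})$.} If $\rho = \pi_\R$ for an irreducible unitary $\pi$ on a complex Hilbert space $K$, then multiplication by $i$ on $K$ is an orthogonal operator on $H = K_\R$ commuting with $\pi_\R$, and it is not a real scalar multiple of the identity (its square is $-\mathrm{id}$, ruling out $\pm\mathrm{id}$, and no real scalar other than $\pm 1$ is orthogonal). So $\rho(G)' \supsetneq \R \cdot \mathrm{id}$, giving $(\mathrm{i})$.

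\medskip

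\noindent\emph{Step 3: $(\mathrm{i}) \Rightarrow (\mathrm{iii})$, the main point.} Suppose $A = \rho(G)' \neq \R$. The key claim is that in this case we may choose a complex structure on $H$ commuting with $\rho$. Concretely: $A$ contains either a copy of $\C$ or a copy of $\mathbb{H}$; in either case $A$ contains an element $J$ with $J^2 = -\mathrm{id}$ that, after normalizing via the polar decomposition (or by averaging the inner product, which changes nothing since $\rho$ is already isometric), can be taken orthogonal, i.e.\ $J^* = J^{-1} = -J$. Concretely, writing $J = u|J|$ for the polar decomposition, both $u$ and $|J|$ lie in $A$; since $J^2 = -\mathrm{id}$ we get $|J|^2 = \mathrm{id}$, so $|J| = \mathrm{id}$ and $J = u$ is orthogonal. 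Such a $J$ turns $H$ into a complex Hilbert space $K$ — with $i \cdot \xi := J\xi$ and Hermitian inner product $\langle \xi, \eta\rangle_K := \langle \xi,\eta\rangle + i\langle J\xi, \eta\rangle$ — on which $\rho$ becomes a unitary representation $\pi$ with $\pi_\R = \rho$. Finally $\pi$ is irreducible: a $\pi$-invariant complex-closed subspace of $K$ is in particular a $\rho$-invariant real-closed subspace of $H$, hence $0$ or $H$ by irreducibility of $\rho$. This completes the cycle $(\mathrm{i}) \Rightarrow (\mathrm{iii}) \Rightarrow (\mathrm{i}) \Leftrightarrow (\mathrm{ii})$.

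\medskip

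\noindent The main obstacle is Step 3: one must be careful that the complex structure $J$ extracted from the division algebra $A$ is genuinely orthogonal for the given inner product (so that $\rho$ really is \emph{unitary} and not merely invertible) — this is handled by the polar-decomposition argument above, using that the commutant is closed under the polar parts. A secondary technical point is verifying the real form of Schur's lemma, that a norm-closed real division algebra of operators on a real Hilbert space is $\R$, $\C$, or $\mathbb{H}$; this follows from the Gelfand--Mazur theorem applied to the (necessarily commutative, when a single self-adjoint or the field is) unital Banach division algebra structure, or can simply be cited.
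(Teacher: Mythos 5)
Your proof is correct, but it takes a genuinely different route from the paper's for the key implication. The paper proves (i)$\Rightarrow$(ii) exactly as in your Step 1 (via $\rho_\C(G)'=\rho(G)'\otimes_\R\C$ and Schur's lemma), but it obtains (iii) from (ii) by a decomposition trick: writing $\rho_\C=\pi\oplus\sigma$ and using $(\rho_\C)_\R\cong\rho\oplus\rho=\pi_\R\oplus\sigma_\R$, irreducibility of $\rho$ forces $\rho\cong\pi_\R$ (and then $\pi$ is irreducible because $\pi_\R$ is); this needs no classification of the commutant, no Frobenius/Gelfand--Mazur, and no polar decomposition. You instead prove (i)$\Rightarrow$(iii) directly by extracting a $\rho$-equivariant orthogonal complex structure $J$ from the commutant, which is more constructive and closely parallels the paper's proof of the companion Proposition \ref{irreducible unitary}. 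One step of yours needs repair as stated: the inference ``$J^2=-\id$ hence $|J|^2=\id$'' is not a formal implication (there are non-orthogonal operators with square $-\id$); what saves it is that every self-adjoint element of $\rho(G)'$ is a real scalar (its spectral projections give invariant subspaces), so $J^*J=\lambda\,\id$ with $\lambda>0$, and then the polar part $u=\lambda^{-1/2}J$ is orthogonal with $u^2=-\lambda^{-1}\id$, forcing $\lambda=1$. In fact, with that observation you can bypass the division-algebra classification entirely: if $\rho(G)'\neq\R\,\id$, pick $T\in\rho(G)'\setminus\R\,\id$; then $K=T-T^*$ is a nonzero skew-adjoint element of the commutant ($K=0$ would make $T$ self-adjoint, hence scalar), $K^*K=-K^2=\lambda\,\id$ with $\lambda>0$, and $J=\lambda^{-1/2}K$ is the desired orthogonal complex structure. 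With this fix, your cycle (i)$\Rightarrow$(iii)$\Rightarrow$(i)$\Leftrightarrow$(ii) is complete; the trade-off is that your argument exhibits the complex structure explicitly, while the paper's is shorter and avoids all structure theory of the commutant.
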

\begin{proof}
$(\rm i) \Rightarrow (\rm ii)$ Suppose that $\rho(G)' \neq \R$. Then $\rho_\C(G)'=\rho(G)' \otimes_\R \C \neq \C$, hence $\rho_\C$ is not irreducible by Schur's lemma.

$(\rm ii) \Rightarrow (\rm iii)$ Suppose that $\rho_\C$ is not irreducible. Write $\rho_\C=\pi \oplus \sigma$ where $\pi$ and $\sigma$ are two non-zero unitary representations of $G$. Then we have
$$ \rho \oplus \rho = (\rho_\C)_\R = \pi_\R \oplus \sigma_\R$$
as orthogonal representations. Since $\rho$ is irreducible, this forces $\rho \cong \pi_\R \cong \sigma_\R$.

$(\rm iii) \Rightarrow (\rm i)$ is obvious.
\end{proof}

\begin{prop} \label{irreducible unitary}
Let $G$ be a locally compact group. Let $\pi : G \curvearrowright H$ be an irreducible unitary representation. The following are equivalent :
\begin{enumerate}[\rm (i)]
\item $\pi \cong \pi^*$.
\item $\pi_\R$ is not irreducible.
\item $\pi=\rho_\C$ for some irreducible orthogonal representaiton $\rho$.
\end{enumerate}
\end{prop}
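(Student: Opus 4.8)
The plan is to follow the pattern of Proposition~\ref{irreducible orthogonal}, with a conjugate-linear intertwiner playing the role of the commutant: I would establish $(\rm iii) \Leftrightarrow (\rm ii)$ and the easy implication $(\rm iii) \Rightarrow (\rm i)$, and then prove $(\rm i) \Rightarrow (\rm iii)$, which together give all the equivalences.

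For $(\rm iii) \Rightarrow (\rm i)$ and $(\rm iii) \Rightarrow (\rm ii)$: if $\pi = \rho_\C$ for an irreducible orthogonal representation $\rho : G \curvearrowright K$, then $H = K \otimes_\R \C$ carries the canonical conjugation $J$, the $\R$-linear involution fixing $K$ pointwise and sending $i\xi$ to $-i\xi$ for $\xi \in K$. It is anti-unitary and commutes with $\pi$, so under the identification of $\overline{H}$ with $H$ it becomes a unitary intertwiner $\pi \to \pi^*$; this gives $(\rm i)$. Regarded as an $\R$-linear operator on $H_\R$, its $\pm 1$-eigenspaces are $K$ and $iK$, which are nonzero and $\pi_\R$-invariant, so $\pi_\R$ is reducible (indeed $\pi_\R \cong \rho \oplus \rho$); this gives $(\rm ii)$.

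For $(\rm ii) \Rightarrow (\rm iii)$: write $\pi_\R = \sigma_1 \oplus \sigma_2$ with $\sigma_1, \sigma_2$ nonzero orthogonal representations, and complexify, using $(\pi_\R)_\C \cong \pi \oplus \pi^*$ from the preliminaries, to obtain $(\sigma_1)_\C \oplus (\sigma_2)_\C \cong \pi \oplus \pi^*$. Since $\pi$ is irreducible, every nonzero proper subrepresentation of $\pi \oplus \pi^*$ is irreducible; as $(\sigma_1)_\C$ and $(\sigma_2)_\C$ are nonzero and proper, they are both irreducible, hence so are $\sigma_1$ and $\sigma_2$ (an invariant subspace of $\sigma_i$ complexifies to an invariant subspace of $(\sigma_i)_\C$). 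After relabeling we may assume $(\sigma_1)_\C \cong \pi$, and then $\rho := \sigma_1$ witnesses $(\rm iii)$.

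The remaining implication $(\rm i) \Rightarrow (\rm iii)$ is the substantial one, and it is where I expect the main difficulty to lie. From $\pi \cong \pi^*$ one obtains a nonzero conjugate-linear operator $J : H \to H$ with $J\pi(g) = \pi(g)J$ for all $g$, by composing the unitary intertwiner $\pi \to \pi^*$ with the canonical conjugate-linear isometry $\overline{H} \to H$. By Schur's lemma the positive operator $J^*J$, which commutes with $\pi$, is a positive scalar, so after rescaling we may take $J$ anti-unitary; then $J^2 = \mu\, \id$ for a unitary scalar $\mu$, and since $J$ is conjugate-linear the relation $J(J^2) = (J^2)J$ forces $\mu = \bar\mu$, hence $\mu \in \{+1, -1\}$. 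If $\mu = +1$, then $J$ is a real structure: $K := \ker(J - \id)$ is a nonzero $\pi_\R$-invariant real subspace with $K \otimes_\R \C \cong H$, and $\rho := \pi|_K$ is an orthogonal representation with $\rho_\C \cong \pi$ that is irreducible because an invariant subspace of $\rho$ complexifies to one of $\pi$; this yields $(\rm iii)$. The main obstacle is thus to exclude the case $\mu = -1$ --- equivalently, to show that the anti-unitary intertwiner supplied by $(\rm i)$ may always be chosen to be a genuine real structure rather than a quaternionic one. This is the delicate heart of the statement.
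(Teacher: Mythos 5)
Your proofs of (iii)$\Rightarrow$(i), (iii)$\Rightarrow$(ii) and (ii)$\Rightarrow$(iii) are correct. The last one takes a genuinely different route from the paper: you complexify a real decomposition $\pi_\R=\sigma_1\oplus\sigma_2$ and compare with $(\pi_\R)_\C\cong\pi\oplus\pi^*$, obtaining $(\sigma_1)_\C\cong\pi$ up to equivalence, whereas the paper works inside $H$: from a nontrivial $\pi_\R$-invariant real subspace $K$ it builds the conjugate-linear operator $J(\xi+i\eta)=\xi-i\eta$ ($\xi,\eta\in K$), uses Schur's lemma on $J^*J$ to see that $J$ is an isometric involution, and gets $\pi=\rho_\C$ on the nose with $\rho=\pi|_K$, $K=\ker(J-1)$. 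Both arguments are sound; the paper's is slightly more concrete, yours avoids unbounded-operator bookkeeping. The genuine gap is exactly the one you flag: (i)$\Rightarrow$(iii) is only established when the invariant anti-unitary $J$ satisfies $J^2=\id$, and the quaternionic case $J^2=-\id$ is left open.

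That case cannot be excluded, because the implication is false there, so no argument will close the gap: the statement itself fails for quaternionic-type representations. Take the two-dimensional irreducible unitary representation $\pi$ of $SU(2)$ (or of the quaternion group $Q_8$). It satisfies $\pi\cong\pi^*$, with invariant anti-unitary $J$ obeying $J^2=-\id$; the commutant of $\pi_\R$ is then $\C+\C J\cong\mathbb{H}$, a division algebra with no nontrivial projections, so $\pi_\R$ \emph{is} irreducible and $\pi$ is not the complexification of any orthogonal representation (real and quaternionic type are mutually exclusive, since any two invariant conjugate-linear isometries differ by a unimodular scalar and hence have squares of the same sign). Thus (i) holds while (ii) and (iii) fail. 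Your instinct that this is the delicate heart is exactly right, and in fact the paper's own proof of (i)$\Rightarrow$(ii) breaks at the same spot: since $J^2=\lambda$ commutes with the conjugate-linear $J$, one gets $\bar\lambda=\lambda$, so $\lambda=\pm1$; when $\lambda=-1$ the scalars $\mu=\pm i$ give $\ker(J-\mu)=\ker(J+\mu)=\{0\}$ (from $J\xi=\mu\xi$ one gets $J^2\xi=|\mu|^2\xi=\xi$, contradicting $J^2=-\id$), so the claimed decomposition of $H_\R$ is vacuous. The equivalence is correct if (i) is replaced by the existence of an invariant conjugation with square $+\id$ (real type); note also that only (ii)$\Leftrightarrow$(iii), which both you and the paper prove correctly, is what is actually used later (in the proposition characterizing WIE), so the paper's main results are unaffected.
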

\begin{proof}
$(\rm i) \Rightarrow (\rm ii)$ Suppose that $\pi \cong \pi^*$. In other words, there exists a conjugate-linear isometry $J$ of $H$ that commutes with $\pi$. Then $J^2$ is a $\C$-linear isometry that commutes with $\pi$. Since $\pi$ is irreducible, Schur's lemma implies that $J^2=\lambda$ for some $\lambda \in \C$, with $|\lambda|=1$. Then we obtain a non-trivial decomposition $H_\R=\ker(J-\mu) \oplus \ker(J+\mu)$ where $\mu^2=\lambda$ and this decomposition is invariant under $\pi_\R$, hence $\pi_\R$ is not irreducible.

$(\rm ii) \Rightarrow (\rm iii)$ Suppose that $\pi_\R$ is not irreducible. Let $K \subset H$ be a non-trivial invariant $\R$-linear subspace. Then $K+iK$ is dense in $H$ and $K \cap iK =\{0\}$. Define a closed densely defined $\R$-linear operator $J$ on $H$ by $J(\xi+i\eta)=\xi-i\eta$ for all $\xi, \eta \in K$. Observe that $J$ is conjugate-linear on $H$, thus $J^*J$ is $\C$-linear. Since $J^*J$ commutes with $\pi$ and $\pi$ is irreducible, then $J^*J=\lambda $ for some $\lambda > 0$. Since $J^2=1$,we have $\lambda=1$. Thus $J$ is an orthogonal operator and $K=\ker(J-1)$ is orthogonal to $iK=\ker(J+1)$. Let $\rho$ be the restriction of $\pi$ to $K$. We proved that $\pi=\rho_\C$. 

$(\rm iii) \Rightarrow (\rm i)$ is obvious.
\end{proof}

\subsection{Weak Identity excluding groups}
We say that a locally compact group $G$ is WIE (for Weak Identity Excluding) if it satisfies the following equivalent conditions.

\begin{prop}
Let $G$ be a locally compact group. Then the following are equivalent :
\begin{enumerate}[ \rm (i)]
\item Every irreducible orthogonal representation of $G$ that has almost invariant vectors is trivial.
\item Every irreducible unitary representation of $G$ that has almost invariant vectors is trivial.
\end{enumerate}
\end{prop}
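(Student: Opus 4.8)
The plan is to derive both implications from the two structural dichotomies of Propositions \ref{irreducible orthogonal} and \ref{irreducible unitary}, combined with the functorial identities $(\rho_\C)_\R \cong \rho \oplus \rho$ and $(\pi_\R)_\C \cong \pi \oplus \pi^*$ recorded above. The mechanism I will use in both directions rests on one elementary observation about almost invariant vectors. First, the operations $\rho \mapsto \rho_\C$ and $\pi \mapsto \pi_\R$ preserve the existence of almost invariant vectors: the very same unit vectors, viewed in the complexification or in the underlying real Hilbert space, still satisfy $\|\pi(g)\xi - \xi\| \to 0$ uniformly on compact sets, since neither the norms nor the group action change. Second, if $\sigma$ is any representation and $\sigma \oplus \sigma$ has almost invariant vectors, then so does $\sigma$: writing an almost invariant unit vector of $\sigma \oplus \sigma$ as a pair $(a_n, b_n)$ with $\|a_n\|^2 + \|b_n\|^2 = 1$, the identity $\|\sigma(g) a_n - a_n\|^2 + \|\sigma(g) b_n - b_n\|^2 = \|(\sigma \oplus \sigma)(g)(a_n,b_n) - (a_n,b_n)\|^2$ shows that both coordinates are themselves almost invariant; after passing to a subsequence and possibly swapping the two coordinates we may assume $\|a_n\| \geq 1/\sqrt{2}$ for all $n$, and then $a_n/\|a_n\|$ is a sequence of almost invariant unit vectors for $\sigma$.

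For $(\rm ii) \Rightarrow (\rm i)$: let $\rho$ be an irreducible orthogonal representation of $G$ with almost invariant vectors. By Proposition \ref{irreducible orthogonal} one of two cases occurs. If $\rho_\C$ is irreducible, then $\rho_\C$ is an irreducible unitary representation with almost invariant vectors, so (ii) forces $\rho_\C$ to be trivial, hence $\rho$ is trivial. Otherwise $\rho \cong \pi_\R$ for some irreducible unitary representation $\pi$; then $\pi$ itself has almost invariant vectors (the same vectors that witness almost invariance for $\pi_\R \cong \rho$), so (ii) forces $\pi$ trivial, hence $\rho \cong \pi_\R$ is trivial.

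For $(\rm i) \Rightarrow (\rm ii)$: let $\pi$ be an irreducible unitary representation of $G$ with almost invariant vectors, so $\pi_\R$ also has almost invariant vectors. By Proposition \ref{irreducible unitary} one of two cases occurs. If $\pi_\R$ is irreducible, then (i) forces $\pi_\R$ trivial, hence $\pi$ is trivial. Otherwise $\pi \cong \rho_\C$ for some irreducible orthogonal representation $\rho$, so that $\pi_\R \cong (\rho_\C)_\R \cong \rho \oplus \rho$; by the elementary observation above, $\rho \oplus \rho$ having almost invariant vectors implies $\rho$ does, so (i) forces $\rho$ trivial, hence $\pi \cong \rho_\C$ is trivial.

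There is no serious obstacle here; the only point requiring care is the passage of almost invariant vectors across the decomposition $\pi_\R \cong \rho \oplus \rho$ — that is, the fact that having a reducible realification (or complexification) does not by itself destroy almost invariance — which is exactly what the subsequence-and-renormalization argument of the first paragraph takes care of. One should also keep in mind that almost invariance is required to be uniform on compact subsets of $G$, but every estimate above is uniform in $g$, so this costs nothing.
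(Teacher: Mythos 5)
Your proof is correct and follows essentially the same route as the paper's: a case analysis via Propositions \ref{irreducible orthogonal} and \ref{irreducible unitary}, combined with the fact that almost invariant vectors pass between a representation and its realification/complexification. The only difference is that you spell out explicitly (via the $\sigma \oplus \sigma$ normalization argument) the transfer of almost invariance that the paper states without proof, which is a harmless elaboration.
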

\begin{proof}
A unitary representation $\pi$ has almost invariant vectors if and only if $\pi_\R$ has almost invariant vectors. An orthogonal representation $\rho$ has almost invariant vectors if and only if $\rho_\C$ has almost invariant vectors. Hence the Proposition follows from Proposition \ref{irreducible orthogonal} and \ref{irreducible unitary}.
\end{proof}

\subsection{Cohomology}
Let $\pi$ be a (real or complex) representation of a locally compact group $G$ on a Hilbert space $H$. We denote by $Z^1(G,\pi)$ the set of all continuous $1$-cocycles for the representation $\pi$, i.e.\ the set of all continuous maps $c : G \rightarrow H$ such that
$$ c(gh)=c(g)+\pi(g)c(h)$$
for all $g,h \in G$. We put on $Z^1(G,\pi)$ the topology of uniform convergence on compact subsets. We denote by $B^1(G,\pi) \subset Z^1(G,\pi)$ the subset of all coboundaries, i.e.\ cocycles of the form
$ c(g)=\xi-\pi(g)\xi$ for some $\xi \in H$. The closure of $B^1(G,\pi)$ inside $Z^1(G,\pi)$ is denoted $\overline{B^1(G,\pi)}$. Its elements are called \emph{almost coboundaries}.

Observe that $Z^1(G,\pi)$ is a module over $\pi(G)'$. This module structure has been observed in \cite{BPV16} as well as \cite{BV97}. It has been exploited fruitfully in \cite{Be17}.

\subsection{Affine isometric actions}
Let $H$ be a (real or complex) Hilbert space. We denote by $\mathrm{Aff}(H)$ the topological group of all affine isometries of $H$, equipped with the topology of pointwise norm convergence on $H$. An \emph{affine isometric action} of a locally compact group $G$ on $H$ is a continuous group homomorphism $\alpha : G \rightarrow \mathrm{Aff}(H)$.  Then $\alpha$ is of the form
$$ \alpha_g(\xi)=\pi(g) \xi + c(g), \quad g \in G, \xi \in H$$
for some representation $\pi$ of $G$ on $H$ and some cocycle $c \in Z^1(G,\pi)$.

We note that $c \in B^1(G,\pi)$ if and only if $\alpha$ has a fixed point and $c \in \overline{B^1(G,\pi)}$ if and only if $\alpha$ has \emph{almost fixed points}, i.e.\ for every compact subset $K \subset G$ and every $\varepsilon > 0$ there exists $\xi \in H$ such that 
$$ \forall g \in K, \quad \| \alpha_g(\xi)-\xi \| \leq \varepsilon.$$

\subsection{Sums of affine isometric actions} 
Let $\alpha_i : G \curvearrowright H_i, \; i=1,2$ be two affine isometric actions with linear parts $\pi_i : G \curvearrowright H_i$ and cocycles $c_i \in Z^1(G,\pi_i)$. Then 
$$c_1 \oplus c_2 : G \ni g \mapsto (c_1(g),c_2(g)) \in H_1 \oplus H_2$$
is a cocycle for $\pi_1 \oplus \pi_2$. Thus, we can define a new affine isometric action $\alpha_1 \oplus \alpha_2: G \curvearrowright H_1 \oplus H_2$ associated to the cocycle $c_1 \oplus c_2  \in Z^1(G, \pi_1 \oplus \pi_2)$.

\subsection{Projected actions} 
Let $\alpha : G \curvearrowright H$ be an affine isometric action with linear part $\pi$ and cocycle $c \in Z^1(G,\pi)$. Let $p \in \pi(G)'$ be a projection. Let $p(\pi)$ be the representation of $G$ obtained by restricting $\pi$ to $p(H)$. Then $pc$ is a cocycle for $p(\pi)$. Thus, we can define the \emph{projected} action $p(\alpha) : G \curvearrowright p(H)$ as the affine isometric action associated to this cocycle $pc \in Z^1(G,p(\pi))$. Observe that $\alpha = p(\alpha) \oplus p^{\perp}(\alpha)$.

\subsection{The laplacian operator}
\begin{df} Let $\mu$ be a probability measure on a locally compact group $G$. We will say that $\mu$ is \emph{adapted} if :
\begin{enumerate}[ \rm (i)]
\item $\mu$ is symmetric.
\item $\mu$ is absolutely continuous with respect to the Haar measure.
\item the support of $\mu$ generates $G$.
\end{enumerate}
Suppose that $G$ is compactly generated. We say that $\mu$ has a \emph{second moment} if $$ \int_G |g|^2 \rd \mu(g) < +\infty $$
where $| \cdot |$ is the word-length function associated to any compact generating set.
\end{df}

\begin{prop}
Let $\pi : G \curvearrowright H$ be a representation that has no invariant vectors. Let $\mu$ be an adapted probability measure on $G$. Let $\Delta_\mu \in \pi(G)''$ be the laplacian
$$ \Delta_\mu= \int_G |1-\pi(g)|^2 \rd \mu(g).$$
Then $\Delta_\mu$ is injective and $\pi$ has almost invariant vectors if and only if $0$ is in the spectrum of $\Delta_\mu$.
\end{prop}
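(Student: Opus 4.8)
The plan is to read everything off the self-adjoint contraction $\pi(\mu):=\int_G\pi(g)\rd\mu(g)$. Since $\mu$ is symmetric, $|1-\pi(g)|^2=(1-\pi(g))^*(1-\pi(g))=2-\pi(g)-\pi(g^{-1})$ integrates to $\Delta_\mu=2(1-\pi(\mu))$, with $\pi(\mu)=\pi(\mu)^*$ and $\|\pi(\mu)\|\le1$; hence $\Delta_\mu\ge0$, $\Sp(\Delta_\mu)\subseteq[0,4]$, and $0\in\Sp(\Delta_\mu)$ if and only if $1\in\Sp(\pi(\mu))$. Moreover $\langle\Delta_\mu\xi,\xi\rangle=\int_G\|\xi-\pi(g)\xi\|^2\rd\mu(g)$ for every $\xi$. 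Injectivity is then immediate: if $\Delta_\mu\xi=0$ this integral vanishes, so $\pi(g)\xi=\xi$ for $\mu$-almost every $g$; the stabiliser $\{g:\pi(g)\xi=\xi\}$ is a closed subgroup of full $\mu$-measure, hence contains $\supp\mu$, hence equals $G$ since $\supp\mu$ generates $G$, so $\xi$ is an invariant vector and therefore $\xi=0$.

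For the ``only if'' direction, since $\Delta_\mu\ge0$ it is enough to produce unit vectors $\xi_n$ with $\langle\Delta_\mu\xi_n,\xi_n\rangle\to0$. I take $\xi_n$ almost invariant and split $\int_G\|\xi_n-\pi(g)\xi_n\|^2\rd\mu(g)$ over a compact set $Q$, where the integrand tends to $0$ uniformly, and over $G\setminus Q$, where it is bounded by $4$ and $\mu(G\setminus Q)$ is made arbitrarily small.

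The substance is the ``if'' direction. Assuming $1\in\Sp(\pi(\mu))$, pick unit vectors $\xi_n$ with $\langle\pi(\mu)\xi_n,\xi_n\rangle\to1$; since $\|\pi(\mu)\|\le1$ this forces $\|\pi(\mu)\xi_n-\xi_n\|\to0$, and by a telescoping estimate $\|\pi(\mu^{*k})\xi_n-\xi_n\|\to0$, equivalently $\int_G(1-\mathrm{Re}\,\phi_n)\rd\mu^{*k}\to0$, for every $k\ge1$, where $\phi_n(g):=\langle\pi(g)\xi_n,\xi_n\rangle$. Since each $\mu^{*k}$ is absolutely continuous, a diagonal extraction gives a subsequence along which $1-\mathrm{Re}\,\phi_n\to0$ almost everywhere on the set where the density of $\mu^{*k}$ is positive, for every $k$, hence almost everywhere on $G$ — here one uses that $\supp\mu$ is symmetric and generates $G$, which forces the density of $\sum_k2^{-k}\mu^{*k}$ to be positive almost everywhere. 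As $|\phi_n|\le1$, this gives $\phi_n\to1$ almost everywhere on $G$, and dominated convergence yields $\langle\pi(h)\xi_n,\xi_n\rangle\to\int_Gh\rd m$ for every $h\in L^1(G)$, so that $\|\pi(h)\|\ge|\int_Gh\rd m|$ for all such $h$; this is exactly weak containment of the trivial representation in $\pi$, i.e.\ $\pi$ has almost invariant vectors. The real case is identical, or follows by complexification.

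I expect the main obstacle to be precisely the passage from the ``averaged'' smallness $\int_G(1-\mathrm{Re}\,\phi_n)\rd\mu^{*k}\to0$ to pointwise almost-everywhere smallness of $1-\mathrm{Re}\,\phi_n$: this is where all three adaptedness hypotheses on $\mu$ are really used (symmetry and generation, so that $\sum_k2^{-k}\mu^{*k}$ charges a co-null set; absolute continuity, so that this measure can be compared with Haar measure). If one prefers, this implication can instead be quoted from the classical relationship between the spectral gap of a random-walk Laplacian and the absence of almost invariant vectors.
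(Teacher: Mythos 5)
Your argument is correct, but it is worth knowing that the paper does not prove this proposition at all: it simply cites \cite[Proposition G.4.2]{BHV08}, which is exactly the classical Kesten/Derriennic--Guivarc'h-type statement relating almost invariant vectors to the spectrum of $\pi(\mu)$ for an adapted measure. What you have done is essentially reprove that cited result. Your reduction $\Delta_\mu=2(1-\pi(\mu))$, the injectivity argument (a closed set of full $\mu$-measure contains $\supp\mu$, and the stabiliser of $\xi$ is a closed subgroup, hence all of $G$), and the easy implication by splitting the integral over a compact set are all fine, as is the hard direction via the positive type functions $\phi_n$ and the convolution powers $\mu^{*k}$, concluding with Fell's characterisation of weak containment. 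The one step you assert rather than prove --- that the density of $\sum_k 2^{-k}\mu^{*k}$ is positive almost everywhere --- is true but not immediate from ``$\supp\mu$ is symmetric and generating'' alone (support equal to $G$ does not by itself force an a.e.\ positive density); the standard argument picks a bounded, compactly supported minorant $g$ of the density of $\mu$, notes that $g\ast g$ is continuous and not identically zero, so that $\mu^{*2}\ge c\,\mathrm{Haar}|_V$ for some nonempty open $V$ and $c>0$, and then uses that $\bigcup_m(\supp\mu)^m$ is a dense subgroup (this is where symmetry enters) to see that the density of $\sum_m 2^{-m}\mu^{*m}\ast\mathrm{Haar}|_V$, namely $x\mapsto\sum_m 2^{-m}\mu^{*m}(xV^{-1})$ up to constants, is everywhere positive. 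You flag this as the main obstacle and offer to quote the classical spectral-gap result instead --- which is precisely the paper's choice; so your proposal trades the paper's one-line citation for a self-contained proof whose only remaining debt is that lemma (or, in your fallback version, the same citation).
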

\begin{proof}
See \cite[Proposition G.4.2]{BHV08}.
\end{proof}

\subsection{$\rL^2$-integrable cocycles}
Let $\pi : G \curvearrowright H$ be a representation of a locally compact group $G$. Let $\mu$ be an adapted probability measure on $G$. We define 
$$Z^1_\mu(G,\pi)=\{ c \in Z^1(G,\pi) \mid \int_G \| c(g)\|^2 \rd \mu(g) < +\infty \}.$$

\begin{prop}
The space $Z^1_\mu(G,\pi)$ is a closed subspace of $\rL^2(G,\mu,H)$.
The inclusion map $\iota  : Z^1_\mu(G,\pi) \rightarrow Z^1(G,\pi)$ is continuous, i.e.\ for every compact subset $Q \subset G$, there exists $\kappa > 0$ such that
$$ \forall c \in  Z^1_\mu(G,\pi) , \quad  \sup_{g \in Q} \| c(g) \| \leq \kappa \left( \int_G \| c(g)\|^2 \rd \mu(g) \right)^{1/2}.$$
Moreover, if $G$ is compactly generated and $\mu$ has a second moment, then $\iota$ is a (surjective) homeomorphism.
\end{prop}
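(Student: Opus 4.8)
The plan is to establish the three assertions in the order: first the continuity of $\iota$, then the closedness, then the homeomorphism statement. Note that $Z^1_\mu(G,\pi)$ is visibly a linear subspace of $L^2(G,\mu,H)$, and the natural map into $L^2(G,\mu,H)$ is injective (a continuous cocycle vanishing $\mu$-a.e.\ vanishes on $\supp\mu$, hence on all of $G$ by the cocycle relation and $\langle\supp\mu\rangle=G$), so only the topological claims need proof. For the continuity of $\iota$ I would use two ingredients. The first is elementary: for every cocycle $c$ and every $n\ge1$ one has $\|c\|_{L^2(\mu^{*n})}\le n\|c\|_{L^2(\mu)}$, since $c(g_1\cdots g_n)=\sum_{i=1}^n\pi(g_1\cdots g_{i-1})c(g_i)$ gives $\|c(g_1\cdots g_n)\|^2\le n\sum_i\|c(g_i)\|^2$, which one integrates against $\mu^{\otimes n}$. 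The second — the technical heart — is a \emph{Key Lemma}: for every compact $Q\subseteq G$ there exist $n\in\N$ and $c_0>0$ with $d\mu^{*n}/d\lambda\ge c_0$ almost everywhere on $Q$. Granting it, the estimate is immediate: fix a relatively compact open neighbourhood $V$ of $e$ and, enlarging $Q$, assume $e\in Q$; averaging the identity $c(g)=c(gh)-\pi(g)c(h)$ over $h\in V$ against the normalised Haar measure of $V$, then using Cauchy--Schwarz and the left invariance of $\lambda$, one gets
$$\sup_{g\in Q}\|c(g)\|\ \le\ \frac{2}{\lambda(V)^{1/2}}\Big(\int_{QV}\|c\|^2\,d\lambda\Big)^{1/2};$$
applying the Key Lemma to $\overline{QV}$ gives $n,c_0$ with $\int_{QV}\|c\|^2\,d\lambda\le c_0^{-1}\int\|c\|^2\,d\mu^{*n}\le c_0^{-1}n^2\|c\|^2_{L^2(\mu)}$, so the desired bound holds with $\kappa=2n(c_0\lambda(V))^{-1/2}$.

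For the Key Lemma, which I expect to be the main obstacle: since $\phi:=d\mu/d\lambda\in L^1(\lambda)$ and $\mu$ is symmetric, choose a compact symmetric $L$ with $\lambda(L)>0$ and $\phi\ge\varepsilon>0$ on $L$; then $\mu^{*2}\ge\varepsilon^2(\mathbf{1}_L\lambda)^{*2}$, whose density $\mathbf{1}_L*\mathbf{1}_L$ is continuous with value $\lambda(L)>0$ at $e$, so $\mu^{*2}\ge c\,\lambda|_W$ for some $c>0$ and some open neighbourhood $W$ of $e$. Hence for each $M\ge1$ we have $\mu^{*(M+2)}\ge\mu^{*M}*(c\,\lambda|_W)$, and the density of $\mu^{*M}*(\lambda|_W)$ at $x$ is $\mu^{*M}(xW^{-1})$ — a lower semicontinuous function of $x$ (since $W$ is open) which is strictly positive on the open set $(\supp\mu)^M W$ (for $x=pw$ there, $xW^{-1}$ is an open neighbourhood of $p\in\supp\mu^{*M}$). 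As $\supp\mu$ is symmetric and generates $G$, the sets $(\supp\mu)^M W$ increase, in steps of two, with union $G$; so a given compact $Q$ lies in one of them, and there the lower semicontinuous density above is bounded below by a positive constant, proving the lemma. (The non-unimodular case only requires carrying the modular function along, which changes nothing structurally.)

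Closedness of $Z^1_\mu(G,\pi)$ then follows formally: if $(c_k)$ is $\|\cdot\|_{L^2(\mu)}$-Cauchy, the continuity estimate makes it Cauchy for every seminorm $\sup_Q\|\cdot\|$, hence convergent in the complete space $Z^1(G,\pi)$ to a continuous cocycle $c$; in particular $c_k\to c$ pointwise, a fortiori $\mu$-a.e. Since also $c_k\to f$ in $L^2(\mu)$ for some $f$, a subsequence of which converges $\mu$-a.e., we get $f=c$ $\mu$-a.e., so $c\in Z^1_\mu(G,\pi)$ and $c_k\to c$ there. Thus $Z^1_\mu(G,\pi)$ is complete, i.e.\ closed in $L^2(G,\mu,H)$.

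Finally, suppose $G$ is compactly generated by a compact symmetric set $S$, with word length $|\cdot|$, and $\int_G|g|^2\,d\mu(g)<\infty$. For any cocycle $c$ put $M_S(c):=\sup_{s\in S}\|c(s)\|<\infty$; writing $g=s_1\cdots s_{|g|}$ gives $\|c(g)\|\le|g|\,M_S(c)$, so $\int\|c\|^2\,d\mu\le M_S(c)^2\int|g|^2\,d\mu<\infty$ and hence $c\in Z^1_\mu(G,\pi)$: the map $\iota$ is onto. Moreover $\|c\|_{L^2(\mu)}\le M_S(c)\,(\int|g|^2\,d\mu)^{1/2}$, and $c\mapsto M_S(c)$ is one of the seminorms defining the topology of $Z^1(G,\pi)$, so $\iota^{-1}$ is continuous; together with the continuity of $\iota$ already established, $\iota$ is a homeomorphism. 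The one genuinely non-formal ingredient in the whole argument is the Key Lemma on convolution powers.
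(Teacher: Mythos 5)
The route you take (a pointwise bound obtained by averaging the cocycle identity over a small neighbourhood, combined with domination of Haar measure on compacta by a convolution power of $\mu$, then the formal completeness argument and the second-moment argument for surjectivity) is sound, and note that the paper itself offers no proof here, deferring to \cite[Lemma 2.1]{EO18}, so a self-contained argument of this kind is exactly what is needed. However, your Key Lemma is false as stated, because of a periodicity obstruction: take $G=\Z$ (Haar $=$ counting measure, so every measure is absolutely continuous), $\mu=\tfrac12(\delta_{-1}+\delta_{1})$, which is adapted in the paper's sense, and $Q=\{0,1\}$. Then $\mu^{*n}$ charges only integers of the same parity as $n$, so no single $n$ and $c_0>0$ can give $d\mu^{*n}/d\lambda\ge c_0$ on $Q$. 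The precise step of your proof that breaks is the last one: the open sets $(\supp\mu)^M W$ increase only along each parity class, so from the fact that their union is $G$ you may only conclude that a compact $Q$ satisfies $Q\subseteq(\supp\mu)^M W\cup(\supp\mu)^{M+1}W$ for some $M$, not that $Q$ lies in a single one (in the example, $(\supp\mu)^M W=(\supp\mu)^M$ consists of integers of one parity).

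The gap is local and easily repaired without touching anything else: state the lemma for the measure $\mu^{*n}+\mu^{*(n+1)}$ (or for $\sum_{m\le n}\mu^{*m}$) rather than for a single convolution power. Your own argument proves exactly this, since on $Q\cap(\supp\mu)^{M}W$ the density of $\mu^{*(M+2)}$ is bounded below and on $Q\cap(\supp\mu)^{M+1}W$ that of $\mu^{*(M+3)}$ is, both via the lower semicontinuous function $x\mapsto\mu^{*M}(xW^{-1})$. In the main estimate you then replace $\int\|c\|^2\,d\mu^{*n}\le n^2\|c\|_{L^2(\mu)}^2$ by the sum of the bounds for $n$ and $n+1$, which yields $\sup_{g\in Q}\|c(g)\|\le 2\bigl(n^2+(n+1)^2\bigr)^{1/2}\bigl(c_0\,\lambda(V)\bigr)^{-1/2}\|c\|_{L^2(\mu)}$, and the proposition follows as before. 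The remaining ingredients are correct as written: the inequality $\|c\|_{L^2(\mu^{*n})}\le n\|c\|_{L^2(\mu)}$, the averaging trick, the injectivity of $Z^1_\mu(G,\pi)\to L^2(G,\mu,H)$ (using adaptedness and the cocycle relation), the completeness argument giving closedness, and the surjectivity and continuity of $\iota^{-1}$ from the second moment (with the observation that $\{|\cdot|\le n\}=S^n$ is measurable). In a final write-up you should also spend a sentence on the two small points you gloss over: the choice of a symmetric compact $L$ with $\phi\ge\varepsilon$ uses the a.e.\ symmetry of the density (with the modular function in the non-unimodular case), and the lower semicontinuity of $x\mapsto\mu^{*M}(xW^{-1})$ uses inner regularity of $\mu^{*M}$.
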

\begin{proof}
See the proof of \cite[Lemma 2.1]{EO18}.
\end{proof}

We view $Z^1_\mu(G,\pi)$ as a Hilbert space since it is a closed subspace of $\rL^2(G,\mu,H)$. We let $\overline{B}^1_\mu(G,\pi)$ denote the closure of $B^1(G,\pi)$ inside $Z^1_\mu(G,\pi)$. Note that $\overline{B}^1_\mu(G,\pi)$ is in general smaller than $\overline{B^1(G,\pi)} \cap Z_\mu^1(G,\pi)$, because the topology of $Z^1_\mu(G,\pi)$ is stronger than the topology of $Z^1(G,\pi)$. 

We will need the following easy lemma.

\begin{lem}
Let $\pi : G \curvearrowright H$ be a representation and take $c \in \overline{B^1(G,\pi)}$. Then there exists an adapted probability measure $\mu$ on $G$ such that $c \in \overline{B}_\mu^1(G,\pi)$.
\end{lem}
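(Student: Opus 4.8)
The plan is to reduce the statement to a concrete convergence estimate. We are given $c\in\overline{B^1(G,\pi)}$, meaning there is a sequence $\xi_n\in H$ with $c_n:=\xi_n-\pi(\cdot)\xi_n\to c$ uniformly on compact sets. We want an adapted probability measure $\mu$ making $c$ a limit of coboundaries in the stronger topology of $Z^1_\mu(G,\pi)$, i.e.\ with $\int_G\|c_n(g)-c(g)\|^2\rd\mu(g)\to 0$ (after passing to a subsequence of the $\xi_n$). Since all the $c_n$ and $c$ already lie in $Z^1_\mu(G,\pi)$ for \emph{any} adapted $\mu$ with a second moment (by the last Proposition, using that $G$ is second countable hence $\sigma$-compact, so compactly generated after an exhaustion argument — or more simply: on each fixed compact generating set the cocycle is bounded, and the second-moment condition on $\mu$ controls the growth), the only real issue is making the convergence happen in $\rL^2(\mu)$ rather than merely locally uniformly.

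The key step is a diagonal/weighting construction. First I would fix a compact exhaustion $Q_1\subset Q_2\subset\cdots$ of $G$ with $\bigcup_k Q_k=G$ and each $Q_k$ a neighbourhood of $1$ generating $G$. By hypothesis, for each $k$ I can choose an index $n_k$ (increasing) such that $\sup_{g\in Q_k}\|c_{n_k}(g)-c(g)\|\le 2^{-k}$. Now I would build $\mu$ as a convergent combination $\mu=\sum_k t_k\,\mu_k$ where each $\mu_k$ is a fixed adapted probability measure (symmetric, absolutely continuous, support generating) that is \emph{concentrated on $Q_k$} — e.g.\ normalized Haar measure restricted to a symmetric generating neighbourhood inside $Q_k$, symmetrized — and the weights $t_k>0$ sum to $1$ and decay fast enough. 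Concretely, choose $t_k$ so small that $t_k\cdot\sup_{g\in Q_{k'}}\|c_{n_k}(g)-c(g)\|^2$ is summable even accounting for the growth of $\|c_{n_k}-c\|$ off $Q_k$. The point is that $\|c_{n_k}(g)-c(g)\|$ grows at most sub-exponentially in the word length $|g|$ (cocycles are sub-additive: $\|c(g)\|\le |g|\,\max_{s\in S}\|c(s)\|$ for a generating set $S$), so once $\mu$ has finite second moment and the $t_k$ decay geometrically fast relative to these growth bounds, the tail integrals are controlled.

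More carefully, to get $\int\|c_{n_k}-c\|^2\rd\mu\to 0$ I split the integral over $\mu=\sum_j t_j\mu_j$ into $j\le k$ and $j>k$. For $j\le k$ the support $Q_j\subset Q_k$, so the integrand is $\le 2^{-2k}$ there, giving a contribution $\le 2^{-2k}$. For $j>k$ the factor $t_j$ is at our disposal: choosing $t_j$ recursively so that $t_j\int\|c_{n_\ell}(g)-c(g)\|^2\rd\mu_j(g)\le 2^{-j}$ for all $\ell\le j$ (finitely many constraints at each stage, each just a finiteness statement since $\mu_j$ has bounded support) makes the tail $\sum_{j>k}t_j\int\|c_{n_k}-c\|^2\rd\mu_j\le\sum_{j>k}2^{-j}\to 0$. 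Hence $c_{n_k}\to c$ in $Z^1_\mu(G,\pi)$, so $c\in\overline{B}^1_\mu(G,\pi)$. Finally I would check $\mu$ is genuinely adapted: symmetry and absolute continuity are inherited from the $\mu_j$ and preserved under convex combinations, and the support contains $\supp\mu_1$ which already generates $G$.

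The main obstacle is bookkeeping the two competing requirements on the weights $t_k$: they must decay fast enough to tame the (a priori only polynomially-bounded, but we only know boundedness on each $Q_k$) sizes of $\|c_{n_k}-c\|$ outside $Q_k$, yet the resulting $\mu$ must still be a probability measure and — if one wants to invoke the homeomorphism part of the earlier Proposition — have a second moment. Since each $\mu_j$ has compact support, every moment of $\mu_j$ is finite, so $\int|g|^2\rd\mu\le\sum_j t_j\int|g|^2\rd\mu_j$ converges as soon as $t_j$ decays faster than $(\int|g|^2\rd\mu_j)^{-1}$, which is just one more geometric-type constraint folded into the recursive choice of $t_j$. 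So no single step is hard; the care is entirely in ordering the choices (the $Q_k$, then the $n_k$, then the $\mu_k$, then the $t_k$) so that each depends only on previously fixed data.
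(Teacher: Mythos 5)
The paper itself states this lemma without proof (it is labelled an ``easy lemma''), so there is no argument of the author's to compare with; judged on its own terms, your construction is the natural one and it works. The core is sound: pick coboundaries $c_{n_k}$ with $\sup_{g\in Q_k}\|c_{n_k}(g)-c(g)\|\le 2^{-k}$ on a compact exhaustion $(Q_k)$, set $\mu=\sum_j t_j\mu_j$ with each $\mu_j$ symmetric, absolutely continuous and compactly supported in $Q_j$, and choose the $t_j$ recursively so that $t_j\int\|c_{n_\ell}-c\|^2\rd\mu_j\le 2^{-j}$ for all $\ell\le j$ (finitely many finite constraints at each stage, since the integrands are continuous and the $\mu_j$ have compact support). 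Splitting over $j\le k$ and $j>k$ gives $\int\|c_{n_k}-c\|^2\rd\mu\le 2^{-2k}+2^{-k}\to 0$; coboundaries are uniformly bounded, hence lie in $Z^1_\mu(G,\pi)$ for any probability measure, and the constraints with $\ell=1$ together with boundedness of $c_{n_1}$ give $\int\|c\|^2\rd\mu<\infty$, so indeed $c\in\overline{B}^1_\mu(G,\pi)$. The normalization $\sum_j t_j=1$ only rescales these bounds by a fixed constant, which is harmless.

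Two slips should be repaired, though neither touches the core estimate. First, your claim that second countable (hence $\sigma$-compact) implies compactly generated is false: a countable discrete group such as $\bigoplus_{n\in\N}\Z$ or a free group of infinite rank is $\sigma$-compact but not compactly generated. Consequently you cannot require each compact $Q_k$ to generate $G$, nor rely on $\supp\mu_1$ alone generating $G$, and the aside invoking the second-moment/homeomorphism part of the earlier proposition to place all cocycles in $Z^1_\mu$ is both unjustified in general and unnecessary, since your recursive choice of weights already delivers the integrability you need. The fix for adaptedness is immediate: take the $\mu_j$ to be symmetrized, normalized Haar measures restricted to compact symmetric neighbourhoods $V_j$ of $1$ with $\bigcup_j V_j=G$; then $\supp\mu$ contains $\bigcup_j\supp\mu_j$, hence equals $G$, so its support generates, while symmetry and absolute continuity pass to the convex combination as you say. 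With these adjustments the proof is complete and is, in all likelihood, the argument the author had in mind.
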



\begin{prop} \label{polar decomposition}
Let $\pi : G \curvearrowright H$ be a representation that has no invariant vectors. Consider the coboundary operator 
$$ \partial_\mu  : H \ni \xi \mapsto ( g \mapsto \xi -\pi(g)\xi) \in Z_\mu^1(G,\pi).$$
Then
$$ \partial_\mu^*\partial_\mu=\Delta_\mu= \int_G |1-\pi(g)|^2 \rd \mu(g)$$
Let $\partial_\mu=U_\mu\Delta_\mu^{1/2}$ be the polar decomposition of $\partial_\mu$. Then $U_\mu$ is an isometry from $H$ onto $\overline{B}^1_\mu(G,\pi)$ and for $\xi \in H$, we have $U_\mu \xi \in B^1(G,\pi)$ if and only if $\xi$ is in the range of $\Delta_\mu^{1/2}$. 
\end{prop}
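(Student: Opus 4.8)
The plan is to compute the operator $\partial_\mu^*\partial_\mu$ directly, then read off the remaining assertions from general facts about polar decomposition together with the description of closed range versus range of the square root. First I would check that $\partial_\mu$ is a well-defined bounded operator into $Z^1_\mu(G,\pi)$: for $\xi \in H$ the function $g \mapsto \xi - \pi(g)\xi$ is visibly a coboundary, and $\int_G \|\xi-\pi(g)\xi\|^2 \rd\mu(g) = \langle \Delta_\mu \xi, \xi\rangle \le \|\Delta_\mu\|\,\|\xi\|^2 < \infty$ since $\Delta_\mu \in \pi(G)''$ is bounded; this also shows that the image of $\partial_\mu$ lands in $\overline{B^1(G,\pi)} \cap Z^1_\mu(G,\pi)$, and since $B^1(G,\pi)$ is exactly the set of coboundaries of this form, the image of $\partial_\mu$ is a \emph{dense} subspace of $\overline{B}^1_\mu(G,\pi)$ by definition of the latter. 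So $\overline{\ran \partial_\mu} = \overline{B}^1_\mu(G,\pi)$.

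Next I would identify $\partial_\mu^*\partial_\mu$. For $\xi, \eta \in H$,
$$ \langle \partial_\mu \xi, \partial_\mu \eta \rangle_{Z^1_\mu} = \int_G \langle \xi - \pi(g)\xi, \, \eta - \pi(g)\eta \rangle \rd\mu(g) = \Big\langle \Big(\int_G (1-\pi(g))^*(1-\pi(g)) \rd\mu(g)\Big)\xi, \eta \Big\rangle, $$
and $\int_G (1-\pi(g))^*(1-\pi(g)) \rd\mu(g) = \int_G |1-\pi(g)|^2 \rd\mu(g) = \Delta_\mu$ (the integral converging in the weak, hence strong, operator sense by adaptedness — in particular symmetry of $\mu$ makes the cross terms combine correctly). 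Hence $\partial_\mu^*\partial_\mu = \Delta_\mu$. Since $\pi$ has no invariant vectors, the previous Proposition gives that $\Delta_\mu$ is injective, so $\Delta_\mu^{1/2}$ is injective with dense range, and therefore in the polar decomposition $\partial_\mu = U_\mu \Delta_\mu^{1/2}$ the partial isometry $U_\mu$ has initial space $\overline{\ran \Delta_\mu^{1/2}} = H$, i.e.\ $U_\mu$ is an isometry; its final space is $\overline{\ran \partial_\mu} = \overline{B}^1_\mu(G,\pi)$ by the previous paragraph. This establishes that $U_\mu$ is an isometry from $H$ onto $\overline{B}^1_\mu(G,\pi)$.

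For the final assertion, note that $U_\mu \xi \in B^1(G,\pi)$ means $U_\mu\xi = \partial_\mu \eta$ for some $\eta \in H$, i.e.\ $U_\mu\xi = U_\mu \Delta_\mu^{1/2}\eta$; since $U_\mu$ is injective this is equivalent to $\xi = \Delta_\mu^{1/2}\eta$, i.e.\ $\xi \in \ran \Delta_\mu^{1/2}$. Conversely if $\xi = \Delta_\mu^{1/2}\eta$ then $U_\mu \xi = U_\mu\Delta_\mu^{1/2}\eta = \partial_\mu\eta \in B^1(G,\pi)$. I expect the only genuinely delicate point to be the justification that the operator-valued integral $\int_G |1-\pi(g)|^2 \rd\mu(g)$ converges and equals $\Delta_\mu$ as an element of $\pi(G)''$, and that Fubini applies to interchange the $H$-inner product with the $\mu$-integral in the computation of $\partial_\mu^*\partial_\mu$; both follow from the boundedness of $\Delta_\mu$ (already part of the hypothesis, via the cited Proposition) and dominated convergence, since $\|1-\pi(g)\|^2 \le 4$ uniformly. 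Everything else is the standard bookkeeping of polar decomposition of a closed operator with injective, densely-defined modulus.
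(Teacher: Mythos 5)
Your proposal is correct and follows essentially the same route as the paper: identify $\partial_\mu^*\partial_\mu=\Delta_\mu$, use injectivity of $\Delta_\mu$ (no invariant vectors) to see the initial space of $U_\mu$ is all of $H$, note $\ran\partial_\mu=B^1(G,\pi)$ so the final space is $\overline{B}^1_\mu(G,\pi)$, and read off the last equivalence from injectivity of $U_\mu$. The only cosmetic difference is that you compute the quadratic form $\langle\partial_\mu\xi,\partial_\mu\eta\rangle$ directly (where symmetry of $\mu$ is in fact not even needed), whereas the paper computes the adjoint $\partial_\mu^*c=2\int_G c(g)\rd\mu(g)$ explicitly and then invokes symmetry; both are fine.
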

\begin{proof}
We can compute its adjoint
$$ \partial_\mu^* : Z_\mu^1(G,\pi) \ni c \mapsto 2\int_G c(g) \rd \mu(g) \in H$$
hence 
$$\partial_\mu^* \partial_\mu :  \xi \mapsto 2\int_G (\xi-\pi(g)\xi) \rd \mu(g) $$
and, using the fact that $\mu$ is symmetric, we get $ \partial_\mu^*\partial_\mu=\Delta_\mu.$
Since $\partial_\mu$ is injective by assumption on $\pi$ and its range is $B^1(G,\pi)$ by definition, we get that $U_\mu$ is an isometry from $H$ onto $\overline{B}^1_\mu(G,\pi)$. Finally, it is clear that $U_\mu \xi$ is in the range of $\partial_\mu$ if and only if $\xi$ is in the range of $\Delta_\mu^{1/2}$.
\end{proof}

\section{Preliminaries on von Neumann algebras}
Let $H$ be Hilbert space. For every subset $X \subset \B(H)$, we denote by $X'=\{ S \in \B(H) \mid \forall T \in X, \; ST=TS \}$ the commutant of $X$ and by $X''=(X')'$ is the bicommutant of $X$.

We recall that the weak topology on $\B(H)$ is the finest topology that makes all the linear maps $T \mapsto \langle T \xi, \eta \rangle$ for $\xi,\eta \in H$ continuous.

\begin{thm}[von Neumann bicommutant's theorem]
Let $H$ be a (real or complex) space. Let $M \subset \B(H)$ a unital subalgebra such that $T^* \in M$ for all $T \in M$. Then the bicommutant $M''$ is the closure of $M$ for the weak topology.
\end{thm}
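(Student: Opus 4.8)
The plan is to prove the two inclusions $\overline{M}^{\mathrm{w}} \subseteq M''$ and $M'' \subseteq \overline{M}^{\mathrm{w}}$ separately, where $\overline{M}^{\mathrm{w}}$ denotes the weak closure of $M$; the first is formal and the second is the real content. For the first inclusion, observe that for fixed $S \in \B(H)$ the maps $T \mapsto ST$ and $T \mapsto TS$ are weakly continuous (since $\langle ST\xi,\eta\rangle=\langle T\xi,S^*\eta\rangle$ and $\langle TS\xi,\eta\rangle$ depend weakly continuously on $T$), so $M'=\bigcap_{S\in M}\{T : ST-TS=0\}$ is weakly closed, and hence $M''=(M')'$ is weakly closed as well. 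Since $M\subseteq M''$, this gives $\overline{M}^{\mathrm{w}}\subseteq M''$.

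For the reverse inclusion, I would first prove the approximation statement for a single vector: given $T\in M''$, $\xi\in H$ and $\varepsilon>0$, there is $S\in M$ with $\|S\xi-T\xi\|<\varepsilon$. Let $K=\overline{M\xi}$; this is a closed $M$-invariant subspace because $M$ is an algebra, and $K^\perp$ is $M$-invariant as well because $M$ is $*$-closed (if $\eta\in K^\perp$, $S\in M$, $\zeta\in K$, then $\langle S\eta,\zeta\rangle=\langle\eta,S^*\zeta\rangle=0$ since $S^*\zeta\in K$). Hence the orthogonal projection $p$ onto $K$ commutes with $M$, i.e.\ $p\in M'$, so $T\in M''=(M')'$ commutes with $p$ and leaves $K$ invariant. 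Since $M$ is unital we have $\xi\in M\xi\subseteq K$, whence $T\xi\in K=\overline{M\xi}$, which is exactly the claim.

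I would then upgrade this to a finite family of vectors $\xi_1,\dots,\xi_n$ by amplification. On $H^{\oplus n}$ consider the unital $*$-subalgebra $M^{(n)}=\{S^{\oplus n}:S\in M\}$ of $\B(H^{\oplus n})$. Identifying $\B(H^{\oplus n})$ with $n\times n$ matrices over $\B(H)$, one checks directly that $(M^{(n)})'$ is exactly the set of matrices with all entries in $M'$, and then that commuting with those matrices (in particular with the matrix units, whose entries are $0$ or $1$ and hence lie in $M'$) forces an element of $(M^{(n)})''$ to be $T^{\oplus n}$ for some $T\in M''$. Thus $T\in M''$ yields $T^{\oplus n}\in(M^{(n)})''$, and applying the single-vector case in $H^{\oplus n}$ to the vector $(\xi_1,\dots,\xi_n)$ produces, for each $\varepsilon>0$, some $S\in M$ with $\sum_{i=1}^n\|S\xi_i-T\xi_i\|^2<\varepsilon^2$, hence $\|S\xi_i-T\xi_i\|<\varepsilon$ for every $i$. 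This says that $M$ is dense in $M''$ for the strong operator topology, a fortiori for the weak topology; combined with the first inclusion, $\overline{M}^{\mathrm{w}}=M''$.

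I expect the only point requiring genuine care to be the amplification identity $(M^{(n)})''=\{T^{\oplus n}:T\in M''\}$, but it is routine: commutation with the scalar matrix units $E_{kl}\otimes 1$ forces an element of $(M^{(n)})''$ to be diagonal with a single repeated entry $C\in\B(H)$, and commutation with the matrices $1_n\otimes A$ for $A\in M'$ then forces $C\in M''$. Finally, I should note that the whole argument is insensitive to the scalar field: orthogonal projections onto closed subspaces, the adjoint identity $\langle S\eta,\zeta\rangle=\langle\eta,S^*\zeta\rangle$, and the matrix identities for commutants all hold verbatim over $\R$ (with $M_n(\R)$ in place of $M_n(\C)$), so the theorem holds in both the real and the complex setting.
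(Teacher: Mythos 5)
Your proof is correct. Note that the paper itself offers no proof of this statement: it is quoted as classical background in the preliminaries, so there is nothing to compare against except the standard literature. What you have written is precisely von Neumann's classical argument: weak closedness of commutants gives $\overline{M}^{\mathrm{w}}\subseteq M''$; the projection onto $\overline{M\xi}$ lies in $M'$ (using that $M$ is $*$-closed) and is therefore respected by $T\in M''$, which together with $1\in M$ gives the one-vector approximation; and the $n$-fold amplification, with the identification $(M^{(n)})'=M_n(M')$ and the matrix-unit computation, upgrades this to strong (hence weak) density of $M$ in $M''$. All steps check out, including the amplification identity you flag as the delicate point (of which you in fact only need the easy inclusion $T\in M''\Rightarrow T^{\oplus n}\in (M^{(n)})''$). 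Your closing remark that the argument is scalar-blind is apt and relevant here, since the paper invokes the theorem for both real and complex Hilbert spaces: projections, adjoints, and the matrix computations indeed work verbatim over $\R$.
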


Recall that a (real or complex) von Neumann algebra $M$ is a weakly closed self-adjoint unital subalgebra of $\B(H)$ where $H$ is a (real or complex) Hilbert space.

We say that a von Neumann algebra is \emph{finite} if it admits a faithful normal tracial state, i.e.\ a weakly continuous linear functional $\tau : M \rightarrow \K$ (where $\K=\R$ or $\C$) such that $\tau(1)=1$ and $\tau(x^*x) \geq 0$ for all $x \in M$ with equality if and only if $x=0$. 

If $M$ is a finite von Neumann algebra and $\tau$ is a faithful normal tracial state on $M$, we define the Hilbert space $\rL^2(M,\tau)$ as the completion of $M$ with respect to the $2$-norm $\|x\|_2=\tau(x^*x)^{1/2}$. We also define the topology of convergence in measure on $M$ by the following distance
$$ d_\tau(x,y)=\tau\left( \min(1,|x-y|) \right).$$
This topology does not depend on the choice of $\tau$ and turns $M$ into a topological algebra. The completion of $M$ with respect to this topology is a complete topolgical algebra that we denote by $\mathcal{A}(M)$. We can identify $\rL^2(M,\tau)$ with a subspace of $\mathcal{A}(M)$, and $M$ acts by left and right multiplication on $\rL^2(M,\tau)$.

\subsection{Ideals in von Neumann algebras} 
It is well-known that every weakly closed left ideal in a von Neumann algebra $M$ is of the form $Mp$ for some projection $p \in M$.
\begin{lem} \label{dense ideal increasing}
Let $M$ be a von Neumann algebra and $I \subset M$ a left ideal. Let $Mp$ be the weak closure of $I$ where $p \in M$ is some projection. Then there exists an increasing sequence of projections $p_n \in I$ such that $\lim_n p_n =p$.
\end{lem}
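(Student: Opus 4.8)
The plan is to build the sequence $p_n$ by a routine exhaustion argument, using the fact that a left ideal is automatically a $*$-closed set of "partial supports" once one passes to projections via the functional calculus. First I would observe that $I$ is a left ideal, so for every $x \in I$ the element $x^*x$ lies in $I$ as well (indeed $x^*x = (x^*)x$ and... no — rather, $x^*x \in \overline{I}^{\,w} = Mp$ need not a priori hold, but in fact we only need $x x^* \in I$, which does hold since $I$ is a \emph{left} ideal: $xx^* = x \cdot x^*$ with $x^* \in M$). Thus for $x \in I$ the spectral projection $q_x = \mathbf{1}_{[\varepsilon,\infty)}(xx^*)$ satisfies $q_x = x x^* f(xx^*)$ for a suitable bounded Borel function $f$, hence $q_x \in I$. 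Moreover $q_x \nearrow$ range projection of $xx^*$ as $\varepsilon \to 0$, and the supremum over all $x \in I$ of these range projections is exactly the left support projection of $\overline{I}^{\,w}$, namely $p$ (this is the standard description of the projection associated to a weakly closed left ideal). So the set $\mathcal P$ of projections in $I$ has supremum $p$.

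Next I would upgrade "supremum $p$" to "there is an \emph{increasing sequence} with limit $p$." The key point is that $\mathcal P$ is upward directed: if $q_1, q_2 \in I$ are projections, then $q_1 \vee q_2 \in I$. To see this, note $q_1 \vee q_2$ is the range projection of $q_1 + q_2 \in I$, and by the previous paragraph's argument (applied to the positive element $a = q_1 + q_2 \in I$, using $a = a^* = a^{1/2}a^{1/2}$ and $aa^* = a^2 \in I$... again invoking only that $I$ is a left ideal so $a^2 = a \cdot a \in I$) its range projection is a weak limit of elements $q_x$ lying in $I$; but the range projection of $q_1+q_2$ equals $q_1 \vee q_2$, and it is itself of the form $a \cdot g(a)$ for bounded Borel $g$ only if $0$ is isolated in the spectrum of $a$, which need not hold. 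So instead: $q_1 \vee q_2 = \lim_n (q_1 + q_2)^{1/n}$ strongly, and each $(q_1+q_2)^{1/n}$ is a norm limit of polynomials in $q_1+q_2$ with zero constant term, hence lies in the norm closure of $I$; but a left ideal need not be norm closed. The clean fix is to use that $\mathbf{1}_{(0,\infty)}(a) = \mathbf{1}_{[1/n,\infty)}(a)$ for $n$ large is false in general — so here is the actual argument: $\mathbf{1}_{[\varepsilon, \infty)}(a) \in I$ for every $\varepsilon > 0$ as shown, and $\mathbf{1}_{[\varepsilon,\infty)}(a) \nearrow q_1 \vee q_2$ as $\varepsilon \downarrow 0$; since also $\mathbf{1}_{[\varepsilon,\infty)}(a) \leq q_1 \vee q_2$ and for $\varepsilon \leq$ (the smallest nonzero spectral value, if $a$ has finite spectrum) we'd get equality — but in the directed-set argument we do not need $q_1 \vee q_2 \in I$ itself, only cofinally many projections in $I$ dominating both $q_1$ and $q_2$, and $\mathbf{1}_{[\varepsilon,\infty)}(a)$ for small $\varepsilon$ does exactly that. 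So $\mathcal P$ is "directed modulo cofinal approximation," which suffices.

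Finally, since $H$ is separable, $M$ is countably generated in the weak topology and $p$ is a countable supremum $p = \sup_k r_k$ of projections $r_k \in \mathcal P$; set $p_n = \mathbf{1}_{[\varepsilon_n, \infty)}(r_1 + \cdots + r_n) \in I$ with $\varepsilon_n \downarrow 0$ chosen small enough that $\|p_n (r_1 \vee \cdots \vee r_n) - (r_1 \vee \cdots \vee r_n)\|_2 < 2^{-n}$ in the $\rL^2$-norm of some fixed faithful state (or directly: $p_n \geq$ any prescribed subprojection of $r_1 \vee \cdots \vee r_n$). Passing if necessary to $\tilde p_n = p_n \vee \cdots$ — more simply, replace $p_n$ by the range projection of $p_1 + \cdots + p_n$, again approximated from inside $I$ — yields an increasing sequence in $I$ whose strong limit dominates every $r_k$, hence equals $p$, and is $\leq p$ throughout. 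I expect the main obstacle to be precisely this bookkeeping: a left ideal is not norm closed, so one cannot form $q_1 \vee q_2$ or $\lim p_n$ naively inside $I$; the argument must everywhere replace "the join/limit is in $I$" by "projections in $I$ arbitrarily close to (and below) the join/limit," using spectral cutoffs $\mathbf{1}_{[\varepsilon,\infty)}$ of positive elements of $I$, which genuinely lie in $I$ because $I$ absorbs left multiplication by $M$.
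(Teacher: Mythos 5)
Your overall strategy (spectral cutoffs of positive elements of $I$ lie in $I$, then exhaust $p$) is reasonable, but there are two problems, one minor and one fatal. The minor one: you have the absorption direction of a left ideal backwards. For $x \in I$ it is $x^*x=(x^*)\cdot x$ that lies in $I$ (left multiplication by $x^*\in M$), while $xx^*=x\cdot x^*$ is a \emph{right} multiple and need not lie in $I$; consequently your projections $q_x=\mathbf{1}_{[\varepsilon,\infty)}(xx^*)$ need not be in $I$, and the suprema of the left supports of elements of $Mp$ is the central support of $p$, not $p$. This is fixable by working with $x^*x$ (your first instinct), whose spectral cutoffs $g(x^*x)\,x^*x=(g(x^*x)x^*)\cdot x$ do lie in $I$ and whose range projections have supremum $p$.

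The fatal gap is the directedness/monotonicity step. You claim that for projections $q_1,q_2\in I$ the cutoffs $\mathbf{1}_{[\varepsilon,\infty)}(q_1+q_2)$ dominate both $q_1$ and $q_2$ once $\varepsilon$ is small, and your final exhaustion (making the $p_n$ increasing and forcing the limit to dominate every $r_k$) rests entirely on this. It is false: $q_1+q_2\geq q_1$ does not imply that spectral projections of $q_1+q_2$ dominate $q_1$, because $q_1+q_2$ need not commute with $q_1$. Concretely, take an infinite orthogonal sum of $2\times 2$ blocks in which $q_1,q_2$ are rank-one projections at angle $\theta_k\to 0$; on each block $q_1+q_2$ has eigenvalues $1\pm\cos\theta_k$, so for \emph{every} $\varepsilon>0$ there are blocks where $\mathbf{1}_{[\varepsilon,\infty)}(q_1+q_2)$ is the rank-one projection onto the large eigenvector, which does not dominate the block of $q_1$; hence $\mathbf{1}_{[\varepsilon,\infty)}(q_1+q_2)\not\geq q_1$ for all $\varepsilon>0$. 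The missing idea is exactly how the paper restores commutation: by Kaplansky's density theorem applied to $I^*\cap I$ (weakly dense in $pMp$) one gets positive contractions $x_n\in I$ with $x_n\to p$ strongly and projections $q_n=\mathbf{1}_{[1/2,1]}(x_n)\in I$ converging to $p$; then, given a projection $p_1\in I$, one forms $y_n=1-(1-p_1)(1-q_n)(1-p_1)=p_1q_np_1+p_1+q_n-p_1q_n-q_np_1\in I$, which \emph{commutes} with $p_1$ and satisfies $p_1\leq y_n\leq p$ and $y_n\to p$; because of the commutation, $p_2=\mathbf{1}_{[1/2,1]}(y_n)\in I$ really does satisfy $p_2\geq p_1$ and is arbitrarily close to $p$, which is what makes the inductive construction of an increasing sequence work. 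Without some such device (or an equivalent comparison-of-projections argument), your proof does not go through.
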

\begin{proof}
Since $I$ is a weakly dense left ideal in $Mp$, then $I^* \cap I$ is a weakly dense subalgebra of $pMp$. Thus, by Kaplansky's density theorem, we can find a sequence of positive contractions $x_n \in  I$ that converges to $p$ in the strong topology. Let $q_n=1_{[1/2,1]}(x_n)$, then $q_n$ is a sequence of projection that converges to $p$.

Now, we want to construct inductively an increasing sequence $p_n \in I$ such that $p_n$ converges to $p$. For this, it is enough to show that for every projection $p_1 \in I$, we can find $p_2 \in I$ with $p_2 \geq p_1$ and $p_2$ arbitrarily close to $p$. So take $p_1 \in I$.  Let $$y_n=1-(1-p_1)(1-q_n)(1-p_1)=p_1q_np_1+p_1+q_n-p_1q_n-q_np_1 \in I.$$ We can check easily that $y_n$ commutes with $p_1$ and $p_1 \leq y_n \leq p$. Moreover, when $n \to \infty$, we have $y_n \to p$. Thus we can take $p_2=1_{[1/2,1]}(y_n)$ for $n$ large enough ($p_2$ is larger than $p_1$ because $y_n$ commutes with $p_1$ and $y_n \geq p_1$).
\end{proof}

\begin{lem} \label{dense ideal intersection}
Let $M$ be a finite von Neumann algebra. Let $I,J \subset M$ be two dense left ideals. Then $I \cap J$ is also dense.
\end{lem}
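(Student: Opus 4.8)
The statement to prove is Lemma~\ref{dense ideal intersection}: if $M$ is a finite von Neumann algebra and $I, J \subset M$ are two dense left ideals, then $I \cap J$ is dense. Here ``dense'' presumably means dense in the topology of convergence in measure (equivalently, the weak closure is all of $M$, so the associated projection is $1$). Let me plan this out.

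The plan is to use Lemma~\ref{dense ideal increasing} to replace each of $I$ and $J$ by an increasing sequence of projections lying inside them. Since $I$ is a dense left ideal, its weak closure is $M = M \cdot 1$, so Lemma~\ref{dense ideal increasing} gives an increasing sequence of projections $p_n \in I$ with $p_n \to 1$ (strongly); similarly an increasing sequence $q_n \in J$ with $q_n \to 1$. Now I want to produce, from $p_n$ and $q_m$, projections lying in $I \cap J$ that converge to $1$. The natural candidate is something built from $p_n$ and $q_n$ that is a projection, lies in $M q_n \subset J$ and in $M p_n \subset I$, and is close to $1$.

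The key step is: given two projections $p, q$ in $M$ with $p, q$ both ``close to $1$'', find a projection $r \le p$, $r \le q$ (hence $r \in Mp \cap Mq \subset I \cap J$, since $Mp \subset I$ because $p \in I$ and $I$ is a left ideal) with $r$ close to $1$. This is a standard fact in finite von Neumann algebras: $p \wedge q$ (the projection onto $p(H) \cap q(H)$) satisfies $1 - p\wedge q = (1-p) \vee (1-q)$, and by the Kaplansky formula / trace estimate, $\tau(1 - p \wedge q) = \tau\big((1-p)\vee(1-q)\big) \le \tau(1-p) + \tau(1-q)$. So if $\tau(1-p_n) \to 0$ and $\tau(1-q_n) \to 0$ — which follows from $p_n \to 1$, $q_n \to 1$ strongly together with normality of $\tau$ — then $r_n := p_n \wedge q_n \in I \cap J$ and $\tau(1 - r_n) \to 0$, which gives $r_n \to 1$ in measure (indeed $d_\tau(r_n, 1) = \tau(1 - r_n) \to 0$) and hence $r_n \to 1$ strongly. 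Since $M r_n \subset I \cap J$ and $x r_n \to x$ in measure for every $x \in M$, the ideal $I \cap J$ is dense. Actually I should double-check: $p_n \in I$ is a projection, so $M p_n \subset I$; likewise $M q_n \subset J$; and $r_n = p_n \wedge q_n \le p_n$ means $r_n = p_n r_n \in M p_n \subset I$ and similarly $r_n \in M q_n \subset J$, so $r_n \in I \cap J$ and $M r_n \subset I \cap J$. Good.

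The main obstacle — really the only nontrivial point — is the subadditivity estimate $\tau\big((1-p)\vee(1-q)\big) \le \tau(1-p) + \tau(1-q)$, which relies essentially on finiteness of $M$ (via the trace and the fact that equivalent projections have equal trace, so that $e \vee f - f \sim e - e\wedge f \le e$ gives $\tau(e \vee f) - \tau(f) = \tau(e) - \tau(e\wedge f) \le \tau(e)$). One should also make sure that ``dense'' is being used consistently with the topology in which Lemma~\ref{dense ideal increasing} produces the approximating projections; the cleanest route is to note that a left ideal is dense in measure iff its weak closure is $M$ iff its associated projection is $1$, so all the density statements translate into statements about the supremum of the projections in the ideal, and the argument above shows $\sup_n r_n = 1$. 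I would write the proof in three short moves: (1) reduce to increasing sequences of projections $p_n \in I$, $q_n \in J$ with $p_n, q_n \nearrow 1$; (2) prove the trace subadditivity for $(1-p)\vee(1-q)$; (3) set $r_n = p_n \wedge q_n$, check $r_n \in I \cap J$, $\tau(1-r_n)\to 0$, hence $I\cap J$ is dense.
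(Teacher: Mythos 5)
Your proposal is correct and follows essentially the same route as the paper: take projections $p_n \in I$, $q_n \in J$ converging to $1$, form $r_n = p_n \wedge q_n \in I \cap J$, and use the trace identity $\tau(p)+\tau(q)=\tau(p\vee q)+\tau(p\wedge q)$ (your subadditivity estimate is the same fact, via the Kaplansky equivalence $p\vee q - q \sim p - p\wedge q$) to conclude $r_n \to 1$. Your extra care about why $r_n \in I\cap J$ and which topology ``dense'' refers to is a welcome but minor elaboration of the paper's argument.
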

\begin{proof}
Let $\tau$ a faithful normal tracial state on $M$. Then we have 
$$ \tau(p)+\tau(q)=\tau(p \vee q)+\tau(p \wedge q)$$
for all projections $p,q \in M$. Let $p_n \in I$ and $q_n \in I$ be two sequences of projections such that $\lim_n p_n=\lim_n q_n=1$. Then the equation above shows that $\lim_n \tau(p_n \wedge q_n)=1$, hence $\lim_n p_n \wedge q_n=1$. Since $p_n \wedge q_n \in I \cap J$ for all $n$, we conclude that $I \cap J$ is dense.
\end{proof}

\begin{lem} \label{dense ideal finite}
Let $M$ be a finite von Neumann algebra. Let $I \subset M$ be a dense left ideal. Then for every $a \in M$, the left ideal $J=\{ x \in M \mid xa \in I\}$ is also dense.
\end{lem}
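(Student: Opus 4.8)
The plan is as follows. First, $J$ is a left ideal: if $x\in J$ and $m\in M$, then $(mx)a=m(xa)\in I$ because $I$ is a left ideal, and $J$ is clearly a linear subspace. Since $M$ is finite, fix a faithful normal tracial state $\tau$ on $M$. Recall that the weak closure of $J$ is of the form $Mp$ for a projection $p\in M$, and that $J$ is dense precisely when $p=1$. So it suffices to produce projections $q_n\in J$ with $\tau(q_n)\to 1$: each $q_n$ then lies in $Mp$, hence $q_n=q_np$ and $q_n\le p$, so $\tau(p)\ge\sup_n\tau(q_n)=1$ and faithfulness forces $p=1$.

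To build the $q_n$ I would use the density of $I$: by Lemma \ref{dense ideal increasing} there is a sequence of projections $p_n\in I$ with $\tau(p_n)\to 1$, i.e.\ $\tau(1-p_n)\to 0$. The point is to invert $a$ on its range ``away from $1-p_n$''. Put $b_n=a(1-p_n)$ and let $b_n=v_n|b_n|$ be its polar decomposition, so $v_n^*v_n=r(b_n)$ is the support projection of $b_n^*b_n=(1-p_n)a^*a(1-p_n)$ and $e_n:=v_nv_n^*$ is the projection onto $\overline{\ran\, b_n}$. Set $q_n=1-e_n$. Then $q_nb_n=(1-v_nv_n^*)v_n|b_n|=0$, i.e.\ $q_na(1-p_n)=0$, so $q_na=q_nap_n\in Mp_n\subseteq I$; hence $q_n\in J$.

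It remains to check $\tau(q_n)\to 1$, that is $\tau(e_n)\to 0$. By traciality, $\tau(e_n)=\tau(v_nv_n^*)=\tau(v_n^*v_n)=\tau(r(b_n))$. Now $0\le b_n^*b_n=(1-p_n)a^*a(1-p_n)\le\|a\|^2(1-p_n)$, so the support projection $r(b_n)$ of $b_n^*b_n$ is dominated by the support projection of $1-p_n$, namely $r(b_n)\le 1-p_n$. Therefore $\tau(e_n)=\tau(r(b_n))\le\tau(1-p_n)\to 0$, which completes the proof.

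The argument is short and I do not anticipate a real obstacle. The only delicate point is the support-projection bookkeeping at the end --- the domination $r(a(1-p_n))\le 1-p_n$ obtained from $0\le b_n^*b_n\le\|a\|^2(1-p_n)$, and the identity $\tau(vv^*)=\tau(v^*v)$ through which finiteness of $M$ is used --- together with the initial idea of truncating $a$ by the left support of $a(1-p_n)$; once that idea is found, everything falls into place.
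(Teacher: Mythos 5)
Your proof is correct, and it takes a genuinely different (and more elementary) route than the paper. The paper first reduces to $a\geq 0$ via polar decomposition, then passes to the algebra $\mathcal{A}(M)$ of measurable operators to form a pseudo-inverse $b$ of $a$ with $ab=ba=s$, and shows $s\in\overline{J}$ by cutting $b$ down to projections $q\leq s$ with $qb$ bounded and comparing $q\wedge p_n$ with $p_n$ via the Kaplansky trace formula $\tau(q)+\tau(p_n)=\tau(q\wedge p_n)+\tau(q\vee p_n)$. You avoid unbounded operators entirely: you take projections $p_n\in I$ with $\tau(p_n)\to 1$ (Lemma \ref{dense ideal increasing} plus normality of $\tau$), kill $a(1-p_n)$ by cutting with the complement $q_n=1-e_n$ of its left support, observe $q_na=q_nap_n\in Mp_n\subseteq I$ so $q_n\in J$, and control $\tau(e_n)$ by the trace identity $\tau(v_nv_n^*)=\tau(v_n^*v_n)$ together with the domination $r(b_n)\leq 1-p_n$ coming from $0\leq b_n^*b_n\leq\|a\|^2(1-p_n)$. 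All the individual steps check out (in particular $q_n\in Mp$ forces $q_n=pq_np\leq p$, so $\tau(p)=1$ and $p=1$ by faithfulness), and finiteness enters exactly once, through $\tau(vv^*)=\tau(v^*v)$, which makes its role more transparent than in the paper's argument; what the paper's route buys is mainly consistency with the $\mathcal{A}(M)$ machinery it sets up anyway and reuses in Lemma \ref{approximation finite}, whereas yours is self-contained, shorter, and needs no reduction to positive $a$.
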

\begin{proof}
We may assume without loss of generality that $a$ is positive. Indeed, in the general case, if $a=v|a|$ is the polar decomposition of $a$ and $p_n \in M$ is a sequence of projections converging to $1$ such that $p_n|a| \in I$, then the sequence $x_n =v p_n v^*+(1-vv^*)$ is in $J$ and converges to $1$.

Now, we can assume that $a$ is positive. Let $b$ be a pseudo-inverse of $a$ in $\mathcal{A}(M)$, i.e. $ab=ba=s$ where $s$ is the support of $a$. Let $q \in M$ be a projection with $q \leq s$ such that $q b$ is bounded. Let $p_n \in I$ such that $\lim_n p_n=1$. Let $r_n= q \wedge p_n$. We claim that $\lim_n r_n = q$. Indeed, since $M$ is finite, it admits a faithful normal tracial state $\tau$. Then we have
$$ \tau(q)+\tau(p_n)=\tau(r_n)+\tau(q \vee p_n).$$
Since $\lim_n \tau(p_n)=1$ and $\tau(q \vee p_n) \leq 1$, we conclude that $\lim_n \tau(r_n)=\tau(q)$, i.e. $\lim_n r_n=q$ as we claimed. Since $r_n \in I$, we then have $r_nb \in J$ for all $n \in \N$. Hence $qb$ is in the closure $\overline{J}$ of $J$. Thus, $aqb=qab=q$ is also in $\overline{J}$. Since this holds for every $q \leq s$ such that $qb$ is bounded, we conclude that $s \in \overline{J}$. Obviously, we have $s^{\perp} \in J$. Thus $1=s+s^{\perp} \in \overline{J}$ and we are done.
\end{proof}

\begin{lem} \label{approximation finite}
Let $M \subset \B(H)$ be a (real or complex) finite von Neumann algebra. Let $T \in M$ be an injective operator. Then for every $\xi \in H$, the left ideal $\{ S \in M' \mid S \xi \in \ran(T)\}$ is dense in $M'$.
\end{lem}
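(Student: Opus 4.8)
The plan is to strip the statement down, by a chain of reductions, to a density statement for a right ideal inside a finite von Neumann algebra, which can then be handled in the spirit of Lemmas~\ref{dense ideal increasing}--\ref{dense ideal finite}. The first reduction is to the case $T\ge 0$. Since $M$ is finite and $T$ is injective, $|T|=(T^*T)^{1/2}$ is injective, so the partial isometry $V$ in the polar decomposition $T=V|T|$ satisfies $V^*V=1$; finiteness of $M$ then forces $VV^*=1$, so $V$ is a unitary of $M$ and $T$ has dense range. As $V\in M$ commutes with $M'$ and $\ran(T)=V\,\ran(|T|)$, the condition $S\xi\in\ran(T)$ is equivalent to $S(V^*\xi)\in\ran(|T|)$, so, after replacing $\xi$ by $V^*\xi$ and $T$ by $|T|$, we may assume $T\ge 0$ is injective with dense range; then $b:=T^{-1}$ is a positive self-adjoint operator affiliated with $M$, it is $\tau$-measurable (because $1_{[0,1/n]}(T)\to 0$), hence $b\in\mathcal{A}(M)$, and $\ran(T)=\dom(b)$. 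The second reduction is to the case where $\xi$ is cyclic for $M$: let $q\in M'$ be the projection onto $\overline{M\xi}$; since $q^\perp\xi=0$ and $q^\perp$ commutes with every $S\in M'$ we get $S\xi=qS\xi\in qH$ for all $S$, and since $q\in M'$ commutes with $T$ we have $\ran(T)\cap qH=\ran(Tq|_{qH})$, so $J$ depends only on $Sq$. Under the surjective $*$-homomorphism $M'\to (Mq)'=M'q$, $S\mapsto Sq$, it is the preimage of $J_q:=\{S'\in M'q\mid S'\xi\in\ran(Tq|_{qH})\}$; because $q\in M'$ the corner $M'q$ is a von Neumann subalgebra of $M'$, so if $J_q$ is dense in $M'q$ one may choose projections $q_n\in J_q$ with $q_n\nearrow q$ and then $q_n+q^\perp\in J$ are projections of $M'$ increasing to $1$, whence $J$ is dense. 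Since $Mq$ is finite, $Tq|_{qH}$ is again positive and injective and $\xi$ is cyclic for $Mq$, we may now assume $\xi$ is cyclic for $M$.

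The main step is to put things in standard form. Since $\xi$ is cyclic for $M$ and $M$ is finite, $\omega_\xi=\langle\,\cdot\,\xi,\xi\rangle$ is a normal state on $M$, hence a vector state $\langle(\cdot)k,k\rangle$ in $\rL^2(M,\tau)$ for some positive $k\in\rL^2(M,\tau)$; the GNS construction then identifies $H$ with the closed $M$-submodule $\rL^2(M,\tau)e$ of $\rL^2(M,\tau)$, where $e:=\supp(k)\in M$, with $M$ acting by left multiplication, $\xi$ corresponding to $\widehat{k}$, and $M'=\{R_x\mid x\in eMe\}$, where $R_x$ denotes right multiplication by $x$ (indeed the commutant of left-$M$ on $\rL^2(M,\tau)$ is right-$M$, and restricting to the invariant subspace $\rL^2(M,\tau)e$, whose projection is $R_e$, gives $R_e(\text{right-}M)R_e=\{R_x\mid x\in eMe\}$). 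Since $b$ is affiliated with $M$ it commutes with right multiplication by $e$, so $b$ preserves $\rL^2(M,\tau)e$ and one checks that $\ran(T|_H)=\{v\in\rL^2(M,\tau)e\mid bv\in\rL^2(M,\tau)\}$. For $S=R_x$ with $x\in eMe$ we have $S\xi=\widehat{kx}$, hence, by associativity in $\mathcal{A}(M)$, $S\xi\in\ran(T|_H)$ if and only if $dx\in\rL^2(M,\tau)$, where $d:=bk\in\mathcal{A}(M)$. Thus, under the anti-isomorphism $x\mapsto R_x$ of $eMe$ onto $M'$, the left ideal $J$ corresponds to the right ideal $I:=\{x\in eMe\mid dx\in\rL^2(M,\tau)\}$ of $eMe$, and it remains to show $I$ is dense.

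For the last step, write the polar decomposition $de=w|de|$ in $\mathcal{A}(M)$. Since $(de)e=de$, the positive operator $|de|$ satisfies $e|de|e=|de|$, i.e.\ it is affiliated with $eMe$; let $g_n:=1_{(0,n]}(|de|)\in eMe$ and $s:=\supp(|de|)\in eMe$, so that $g_n\nearrow s$. Because $|de|g_n$ is bounded we have $dg_n=(de)g_n=w(|de|g_n)\in M\subseteq\rL^2(M,\tau)$, and because $des=de$ we have $d(e-s)=de-de\,s=0$. Hence the projections $g_n+(e-s)$ lie in $I$ and increase to $e$, so $I$ is weakly dense in $eMe$, which completes the argument.

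The step I expect to require the most care is the third one — realizing the abstract left $M$-module $H$ as a submodule of the standard form $\rL^2(M,\tau)$ and checking that the module-theoretic condition $S\xi\in\ran(T)$ becomes the purely algebraic condition $dx\in\rL^2(M,\tau)$; in particular one must verify cleanly that $b$, being affiliated with $M$, commutes with right multiplication by $e$, so that $\ran(T|_H)$ has the stated form. Steps~1, 2 and~4 are routine manipulations with projections, polar decompositions and supports in finite von Neumann algebras, entirely analogous to those already carried out in Lemmas~\ref{dense ideal increasing}--\ref{dense ideal finite}.
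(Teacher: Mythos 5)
Your overall route is sound and, in its computational core (steps 1, 3 and 4), essentially the paper's own argument; but the justification of your second reduction, to a cyclic vector, rests on statements that are false. The projection $q$ onto $\overline{M\xi}$ lies in $M'$, not in $M$, so it does not commute with a general $S\in M'$: the assertion ``$q^\perp$ commutes with every $S\in M'$'' would force $q\in M''=M$, i.e.\ $q$ central, which fails in general, and consequently $S\xi$ need not lie in $qH$ (take $M=\C\,\id$ acting on $\C^2$, $\xi=e_1$ and $S$ the flip: then $qH=\C e_1$ while $S\xi=e_2$). Likewise the commutant of $M$ restricted to $qH$ is $qM'q$, not $M'q$; the set $M'q$ is not a $*$-subalgebra of $M'$, and $S\mapsto Sq$ is not a $*$-homomorphism unless $q$ is central in $M'$. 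So the sentence identifying $J$ as the preimage of $J_q$ under a surjective $*$-homomorphism onto ``$(Mq)'=M'q$'' does not make sense as written.

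The reduction is repairable, because what you actually prove in the cyclic case (steps 3--4) is stronger than abstract density: you exhibit projections in the commutant of $M$ on $\rL^2(M,\tau)e$, namely right multiplication by $g_n+e-s$, increasing to the identity and mapping $\xi$ into $\ran(T)$. Applying this to the finite algebra $M|_{qH}$ acting on $qH$ (whose commutant is $qM'q$, with $\xi$ cyclic and $T|_{qH}$ injective since $T$ commutes with $q\in M'$) gives projections $q_n\in qM'q$ increasing to $q$ with $q_n\xi\in\ran(T|_{qH})\subset\ran(T)$; then $q_n+q^\perp$ are projections of $M'$ increasing to $1$ that lie in the left ideal $J$, which yields density --- this is exactly the final clause of your step 2, and it never needed the faulty homomorphism. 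For comparison, the paper avoids the cyclic-subspace reduction altogether: it writes the module $H$ as $\bigoplus_n\rL^2(M,\tau)q_n$, uses invertibility of $T$ in $\mathcal{A}(M)$ to truncate $T^{-1}\xi$ by right-multiplication projections in each summand (the same truncation trick as your step 4, and without your preliminary reduction to $T\geq 0$), and then assembles these into projections of $M'$ across the summands; that assembly step is precisely where your version needs the repair above. Your step 3, which you flagged as the delicate one, is in fact correct.
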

\begin{proof}
We will prove that there exists an increasing sequence of projections $P_n \in M'$ with $\lim_n P_n=\id$ such that $P_n(\xi) \in \ran(T)$ for all $n$.

Fix a faithful finite trace $\tau$ on $M$. We first prove this when $H=\rL^2(M,\tau)q$ for some projection $q \in M$ and $M$ acts by left multiplication. We view $\rL^2(M,\tau)$ as a subspace of $\mathcal{A}(M)$. Since it is an injective operator, we know that $T$ is invertible in $\mathcal{A}(M)$. Take $\xi \in \rL^2(M,\tau)$. Consider $T^{-1}\xi \in \mathcal{A}(M)$. Then we can find an increasing sequence of projections $p_n \in M$ with $\lim p_n=q$ such that $T^{-1}\xi p_n \in M$ for all $n$. A fortiori $T^{-1}\xi p_n \in \rL^2(M,\tau)q$. This implies that $\xi p_n \in T \rL^2(M,\tau)q$. Now, let $P_n \in M'$ be the projection obtained by right multiplication by $p_n$ on $H=\rL^2(M,\tau)q$. Then by construction, we have $\lim_n P_n=\id_H$ and $P_n(\xi) \in \ran(T)$ for all $n$.

In the general case, we use the fact that every Hilbert $M$-module is of the form $H=\bigoplus_{n \in \N} \rL^2(M,\tau)q_n$ for some sequence of projections $q_n \in M$ where $M$ acts by left multiplication. Then if $\xi \in H$, we can write $\xi =\oplus_n \xi_n$ for some sequence $\xi_n \in \rL^2(M,\tau)q_n$. For each $\xi_n$, we can find a sequence $(p_{n,m})_{m \in \N}$ of projections in $M$ that increases to $q_n$ and such that $\xi_n p_{n,m}$ is in the range of $T$ for all $n,m \in \N$. We then define $P_n \in \B(H)$ to be the operator of right multiplication by
$$ \bigoplus_{m \leq n} p_{n,m}.$$
The sequence of operators $P_n \in M'$ satisfies the desired conclusion.
\end{proof}

\section{Evanescent affine isometric actions} \label{evanescent section}
Let $\pi$ be a representation of a locally compact group $G$ on a Hilbert space $H$. Then $Z^1(G,\pi)$ is a module over the von Neumann algebra $\pi(G)'$. For every $c \in Z^1(G,\pi)$, we define the \emph{reduction ideal} of $c$ by
$$ \mathcal{I}(c)=\{ T \in \pi(G)' \mid Tc \in B^1(G,\pi) \}.$$
It is indeed a left ideal of $\pi(G)'$ which is not necessarily weakly closed. Note that $\mathcal{I}(c)$ depends only on the cohomology class of $c$.
\begin{df}
We say that the cocycle $c$ is :
\begin{itemize}
\item \emph{Irreducible} if $\mathcal{I}(c)=\{0\}$.
\item \emph{Evanescent} if $\mathcal{I}(c)$ is dense in $\pi(G)'$.
\end{itemize}
\end{df}
We say that an affine isometric action $\alpha$ is irreducible (resp.\ evanescent) if the corresponding cocycle is irreducible (resp.\ evanescent).

We denote by $\mathcal{E}(G,\pi)$ the set of all evanescent cocycles for the representation $\pi$. Note that $\mathcal{E}(G,\pi)$ is a priori not a vector space.

Using Lemma \ref{dense ideal increasing}, one sees that the definition of evanescent affine isometric actions given above corresponds to the one given in \cite{AIM19}.

The following properties (already observed in \cite{AIM19}) follow directly from the definitions.

\begin{prop}
Let $\pi$ be a representation of a locally compact group $G$ on a Hilbert space $H$ and take $c \in Z^1(G,\pi)$. If $c$ is evanescent then $c \in \overline{B^1(G,\pi)}$.
\end{prop}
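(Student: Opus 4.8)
The statement to prove is: if $c \in Z^1(G,\pi)$ is evanescent, then $c \in \overline{B^1(G,\pi)}$, i.e.\ $\alpha$ has almost fixed points. Here $\overline{B^1(G,\pi)}$ is the closure for uniform convergence on compact sets.

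The plan is to unwind the definition of evanescence and use the module structure of $Z^1(G,\pi)$ over $\pi(G)'$. By definition, $\mathcal I(c) = \{ T \in \pi(G)' \mid Tc \in B^1(G,\pi)\}$ is dense in $\pi(G)'$ for the appropriate topology making $\pi(G)'$ a topological algebra. In particular the identity $\id \in \pi(G)'$ lies in the closure of $\mathcal I(c)$, so I can choose a sequence (or net) $T_n \in \mathcal I(c)$ with $T_n \to \id$. Concretely, by Lemma~\ref{dense ideal increasing} one may even take $T_n = p_n$ to be an increasing sequence of projections in $\mathcal I(c)$ with $p_n \to \id$ strongly; this is the form of evanescence matching \cite{AIM19} as remarked just above. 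Then $p_n c \in B^1(G,\pi)$ means $p_n c$ is a genuine coboundary: there is $\xi_n \in H$ with $p_n c(g) = \xi_n - \pi(g)\xi_n$ for all $g$.

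First I would write $c = p_n c + p_n^\perp c$. The piece $p_n c$ is an honest coboundary, so the affine action $p_n(\alpha)$ has a fixed point. The piece $p_n^\perp c = (\id - p_n)c$ should become small on compact sets as $n \to \infty$. The key point here: for each fixed $g \in G$, $\|p_n^\perp c(g)\| = \|(\id-p_n)c(g)\| \to 0$ since $p_n \to \id$ strongly and $c(g)$ is a fixed vector. To upgrade this pointwise convergence to uniform convergence on a compact set $Q \subset G$, I would use continuity of $c$ together with a uniform bound: $\sup_{g \in Q}\|c(g)\| < \infty$ by continuity and compactness, and the operators $(\id - p_n)$ are uniformly bounded (by $1$); then an equicontinuity/covering argument on $Q$ gives $\sup_{g \in Q}\|(\id-p_n)c(g)\| \to 0$. (Alternatively, since $\pi(g)$ acts by isometries, $c$ restricted to $Q$ has uniformly small increments on a fine enough partition of $Q$ into small translates, and one checks the convergence on a finite net then propagates.)

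Putting it together: given a compact $Q \subset G$ and $\varepsilon > 0$, choose $n$ large so that $\sup_{g\in Q}\|(\id-p_n)c(g)\| \le \varepsilon$. Since $p_n c \in B^1(G,\pi)$, there is $\xi \in H$ with $p_n c(g) = \xi - \pi(g)\xi$, and then the affine isometry $v = \xi$ satisfies, for $g \in Q$,
$$ \|\alpha_g(\xi) - \xi\| = \|c(g) - (\xi - \pi(g)\xi)\| = \|c(g) - p_n c(g)\| = \|(\id-p_n)c(g)\| \le \varepsilon,$$
so $\alpha$ has almost fixed points, i.e.\ $c \in \overline{B^1(G,\pi)}$.

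The main obstacle is the passage from pointwise to uniform convergence on compact sets in the estimate $\sup_{g\in Q}\|(\id - p_n)c(g)\|\to 0$; this is not entirely automatic since $p_n \to \id$ only strongly, not in norm. The resolution is the standard compactness argument: continuity of $c$ and compactness of $Q$ give, for any $\delta>0$, a finite set $F \subset Q$ with $Q \subset \bigcup_{h\in F}\{g : \|c(g)-c(h)\|<\delta\}$; choosing $n$ so that $\|(\id-p_n)c(h)\| < \delta$ for all $h \in F$ (finitely many conditions) and using $\|\id - p_n\| \le 1$ yields $\sup_{g\in Q}\|(\id-p_n)c(g)\| < 2\delta$. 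Everything else is a direct unwinding of definitions.
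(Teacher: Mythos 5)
Your proof is correct and is essentially the intended one: the paper states this proposition without proof (as following directly from the definitions, via the equivalence with the \cite{AIM19} formulation given by Lemma~\ref{dense ideal increasing}), and your argument --- an increasing sequence of projections $p_n \in \mathcal{I}(c)$ converging strongly to $\id$, so that the coboundaries $p_n c$ converge to $c$ uniformly on compact sets by the standard total-boundedness estimate using $\|\id - p_n\| \leq 1$ --- is exactly the unwinding that remark presupposes. Nothing is missing.
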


\begin{prop} \label{natural decomposition}
Let $\alpha : G \curvearrowright H$ be an affine isometric action with linear part $\pi$ and cocycle $c \in Z^1(G,\pi)$. Let $p \in \pi(G)'$ be the projection such that $\overline{\cI(c)}=\pi(G)'p$. Then $p(\alpha)$ is evanescent and $p^\perp(\alpha)$ is irreducible.

Moreover, for every projection $q \in \pi(G)'$ such that $q(\alpha)$ is evanescent, we have $q \leq p$.
\end{prop}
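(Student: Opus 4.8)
The plan is to read everything off the module structure of $Z^1(G,\pi)$ over $\pi(G)'$ together with Lemma \ref{dense ideal increasing}. First I would reduce to the case where $\pi$ has no invariant vectors: if $H_0 \subset H$ is the space of $\pi$-invariant vectors and $p_0$ the corresponding projection in $\pi(G)'$, then on $p_0(H)$ the action $\alpha$ is a translation action, so $p_0 c$ is trivially a coboundary whenever $c(G) \subset p_0(H)$ — but in general I should be careful, since a cocycle need not take values in the invariant part. The cleaner move is simply to work directly with $\cI(c)$ as a left ideal of $M := \pi(G)'$ without that reduction. Let $p \in M$ be the projection with $\overline{\cI(c)} = Mp$, which exists because every weakly closed left ideal of a von Neumann algebra is of this form. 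The goal is threefold: (a) $p(\alpha)$ is evanescent, (b) $p^\perp(\alpha)$ is irreducible, (c) $p$ is the maximal projection with property (a).

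For (c), which is the easiest and which I would do first: suppose $q \in M$ is a projection with $q(\alpha)$ evanescent, i.e. $\cI(qc)$ is dense in $q M q = (qM'q)$... more precisely $\cI(qc)$ is dense in the commutant of $q(\pi)(G)$ acting on $q(H)$, which is $qMq$. If $S \in qMq$ has $Sqc \in B^1(G, q(\pi))$, then viewing $S$ as an element of $M$ (it satisfies $S = qSq$), we get $Sc = Sqc \in B^1(G,\pi)$, so $S \in \cI(c)$. Thus $\cI(qc) \subset \cI(c) \cap qMq$, and since the former is dense in $qMq$, its weak closure contains $q$; hence $q \in \overline{\cI(c)} = Mp$, which forces $q = qp$, i.e. $q \leq p$ (using that $q$ is a projection: $q = qp$ gives $q = qpq \leq q$, and taking adjoints $q = pq$, so $q \leq p$).

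For (a), that $p(\alpha)$ is evanescent: by Lemma \ref{dense ideal increasing} choose an increasing sequence of projections $p_n \in \cI(c)$ with $p_n \to p$. Each $p_n \in Mp$, so $p_n = p p_n = p_n p \leq p$, and $p_n c = p_n(pc) \in B^1(G,\pi)$ with values in $p_n(H) \subset p(H)$, hence $p_n (p c)$ is a coboundary for $p(\pi)$. Now $p_n \in p M p = (p(\pi)(G))'$, so $p_n \in \cI(pc)$, and since $p_n \to p = 1_{p(H)}$, the ideal $\cI(pc)$ is dense in $pMp$; that is exactly evanescence of $p(\alpha)$.

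For (b), that $p^\perp(\alpha)$ is irreducible: I must show $\cI(p^\perp c) = \{0\}$ in $p^\perp M p^\perp$. Suppose $S \in p^\perp M p^\perp$ is nonzero with $S p^\perp c = Sc \in B^1(G,\pi)$; then $S \in \cI(c)$, so $S \in \overline{\cI(c)} = Mp$, giving $S = Sp = p^\perp S p p^\perp$... but $S = p^\perp S p^\perp$ and $S = Sp$ together force $S = p^\perp S p^\perp p = p^\perp S p^\perp \cdot p$; since $p^\perp p = 0$ this would give $S = Sp = p^\perp(Sp) $, hmm let me just say: $S = Sp$ and $Sp = S p^\perp p = 0$ because $S = Sp^\perp$. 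Hence $S = 0$, contradiction, so $\cI(p^\perp c)$ contains no nonzero element, proving irreducibility.

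The main obstacle is purely bookkeeping: making sure that the identifications "$S \in qMq$ is the same as $S \in M$ with $qSq = S$" and "$\cI(qc)$ computed inside $(q(\pi)(G))' = qMq$" are handled without sign or side errors, and verifying that the topologies match — the density in $\cI(qc)$ is density in the weak/strong topology of $qMq$, which is compatible with the weak topology of $M$ restricted to $qMq$, so the weak-closure arguments in (c) go through. I would flag that nothing here uses finiteness of $M$; that hypothesis enters only in the finer uniqueness statements (Theorem \ref{main finite}), not in this structural proposition. With these identifications pinned down, all three parts are short.
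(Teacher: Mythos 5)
Your proof is correct and is essentially the argument the paper has in mind: the paper states this proposition without proof as following directly from the definitions, and your verification uses exactly the intended ingredients (weakly closed left ideals of $\pi(G)'$ are of the form $\pi(G)'p$, Lemma \ref{dense ideal increasing} for the increasing projections $p_n\in\cI(c)$, and the identification of the commutant of $q(\pi)$ with $q\pi(G)'q$). The only points you glossed over are trivial to fill in, e.g.\ that a coboundary of a projected action can be represented by a vector in $q(H)$ by applying $q$ to the representing vector, and that $q=pq$ forces $q\leq p$.
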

Thus, every affine isometric action has a natural direct sum decomposition into an irreducible part and an evanescent part. However, as we will see from Theorem \ref{not unique decomposition}, this decomposition is \emph{not} unique in general.

\section{Proofs of the main theorems}
\begin{prop} \label{module unique decomposition}
Let $G$ be a locally compact group and let $\pi$ be a representation such that $\mathcal{E}(G,\pi)$ is a $\pi(G)'$-module. Then every affine isometric action with linear part $\pi$ admits a unique decomposition into a direct sum of an irreducible action and an evanescent action.
\end{prop}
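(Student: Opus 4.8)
The existence part needs nothing beyond Proposition \ref{natural decomposition}; the module hypothesis will enter only for uniqueness. Throughout I would use the elementary dictionary that a direct sum decomposition $\alpha = \beta \oplus \gamma$ of an affine isometric action with linear part $\pi$ and cocycle $c \in Z^1(G,\pi)$ is the same thing as a projection $q \in \pi(G)'$, the summands being the projected actions $q(\alpha)$ and $q^\perp(\alpha)$, with cocycles $qc$ and $q^\perp c$. Unwinding the definitions — and noting that a coboundary for $\pi$ taking values in $q^\perp H$ can be realised with a potential in $q^\perp H$ (after subtracting an invariant vector) — one sees that ``$q^\perp(\alpha)$ is irreducible'' is equivalent to $\mathcal{I}(c) \cap q^\perp\pi(G)'q^\perp = \{0\}$. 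With the dictionary in place, existence is immediate: let $p \in \pi(G)'$ be the projection given by Proposition \ref{natural decomposition}, so $\overline{\mathcal{I}(c)} = \pi(G)'p$ and $\alpha = p(\alpha) \oplus p^\perp(\alpha)$ with $p(\alpha)$ evanescent and $p^\perp(\alpha)$ irreducible.

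For uniqueness, let $q \in \pi(G)'$ be any projection with $q(\alpha)$ evanescent and $q^\perp(\alpha)$ irreducible; I must show $q = p$. By the ``moreover'' clause of Proposition \ref{natural decomposition} we already have $q \le p$, so $r := p - q$ is a projection in $\pi(G)'$ with $r \le p$ and $r \le q^\perp$, and it suffices to prove $r = 0$. The computational core is the identity, valid for any $c \in Z^1(G,\pi)$ and any projection $e \in \pi(G)'$,
$$ \mathcal{I}(ec) \;=\; \pi(G)'e^\perp \;+\; \bigl(\mathcal{I}(c) \cap \pi(G)'e\bigr),$$
which follows from $T(ec) = (Te)c$, hence $T(ec) \in B^1(G,\pi) \iff Te \in \mathcal{I}(c)$, together with the splitting $T = Te + Te^\perp$. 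I would combine it with the standard facts that $\mathcal{I}(c)$ is a left ideal, that $\pi(G)'e$ is weakly closed, and that the weak closure of $\pi(G)'e_1 + \pi(G)'e_2$ equals $\pi(G)'(e_1 \vee e_2)$ (weak closures throughout). Taking $e = p$ and using $\mathcal{I}(c) \subseteq \overline{\mathcal{I}(c)} = \pi(G)'p$ gives $\overline{\mathcal{I}(pc)} = \pi(G)'p^\perp + \pi(G)'p = \pi(G)'$, that is, $pc \in \mathcal{E}(G,\pi)$. Since $\mathcal{E}(G,\pi)$ is a $\pi(G)'$-module, $rc = r\cdot(pc) \in \mathcal{E}(G,\pi)$, i.e.\ $\overline{\mathcal{I}(rc)} = \pi(G)'$. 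Feeding this into the identity with $e = r$ and writing $\overline{\mathcal{I}(c)\cap\pi(G)'r} = \pi(G)'r_0$ with $r_0 \le r$, we obtain $\pi(G)'(r^\perp \vee r_0) = \pi(G)'$, forcing $r_0 = r$; in other words $\mathcal{I}(c) \cap \pi(G)'r$ is weakly dense in $\pi(G)'r$.

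Now I would bring in the irreducibility of $q^\perp(\alpha)$. Since $r \le q^\perp$ we have $r\pi(G)'r \subseteq q^\perp\pi(G)'q^\perp$, hence $\mathcal{I}(c) \cap r\pi(G)'r \subseteq \mathcal{I}(c) \cap q^\perp\pi(G)'q^\perp = \{0\}$. On the other hand, because $\mathcal{I}(c)$ is a left ideal and $r \in \pi(G)'$, every $x \in \mathcal{I}(c)\cap\pi(G)'r$ satisfies $rx \in \mathcal{I}(c)$ and $rx = rxr \in r\pi(G)'r$, so $r\cdot(\mathcal{I}(c)\cap\pi(G)'r) \subseteq \mathcal{I}(c)\cap r\pi(G)'r = \{0\}$; yet by weak continuity of $x \mapsto rx$ and the density established above, $r\cdot(\mathcal{I}(c)\cap\pi(G)'r)$ is weakly dense in $r\pi(G)'r$. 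Therefore $r\pi(G)'r = \{0\}$, which forces $r = 0$, i.e.\ $q = p$. As one of the summands in any admissible decomposition is evanescent and the other irreducible, this shows the decomposition is unique.

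The step I expect to require the most care is the passage between the corner $e\pi(G)'e$ — where irreducibility and evanescence of a projected action are tested — and the full commutant $\pi(G)'$, where the module structure of $\mathcal{E}(G,\pi)$ and the canonical projection $p$ live: concretely, the identifications of the reduced reduction ideals with $\mathcal{I}(c) \cap e\pi(G)'e$, the implication $\overline{\mathcal{I}(c)} = \pi(G)'p \Rightarrow pc \in \mathcal{E}(G,\pi)$, and the transfer of weak density through the compression $x \mapsto rx$. All of these reduce, via the displayed identity, to elementary manipulations of left ideals and their weak closures; the decomposition/projection dictionary and the permanence of irreducibility under restriction to invariant subspaces are routine.
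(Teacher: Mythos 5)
Your proof is correct and takes essentially the same route as the paper's: existence via the canonical projection $p$ of Proposition \ref{natural decomposition}, then for uniqueness $q \le p$, $r = p-q$, the module hypothesis making $rc$ evanescent, and a contradiction with the irreducibility of $q^\perp(\alpha)$ forcing $r=0$. The explicit identity for $\mathcal{I}(ec)$ and the corner identifications you spell out are exactly the bookkeeping the paper leaves implicit when it passes between evanescence/irreducibility of projected actions and statements about $\mathcal{I}(c)$.
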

\begin{proof}
Let $\alpha$ be an affine isometric action with linear part $\pi$. Let $c \in Z^1(G,\pi)$ be the corresponding cocycle. Let $p \in \pi(G)'$ be the projection such that $\overline{\cI(c)}=\pi(G)'p$ as in Proposition \ref{natural decomposition}. Let $q \in \pi(G)'$ be a projection such that $q(\alpha)$ is evanescent and $q^{\perp}(\alpha)$ is irreducible. Then $q \leq p$ by Proposition \ref{natural decomposition}. Let $r=q^{\perp}p=pq^{\perp}$. Since $pc$ is evanescent and $\mathcal{E}(G,\pi)$ is a $\pi(G)'$-module, we know that $rc$ is also evanescent, hence $r(\alpha)$ is evanescent. But $q^{\perp}(\alpha)$ is irreducible and $r \leq q^{\perp}$, hence 
$r(\alpha)$ is also irreducible. Therefore, we must have $r=0$, i.e.\ $p=q$.
\end{proof}

\begin{prop} \label{finite implies evanescent}
Let $G$ be a locally compact group and let $\pi$ be a (orthogonal or unitary) representation such that $\pi(G)''$ is a finite (real or complex) von Neumann algebra. Then $\overline{B^1(G,\pi)}=\mathcal{E}(G,\pi)$.
\end{prop}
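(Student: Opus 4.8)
As already noted, every evanescent cocycle is an almost coboundary, so $\mathcal E(G,\pi) \subseteq \overline{B^1(G,\pi)}$ and only the reverse inclusion needs proof: given $c \in \overline{B^1(G,\pi)}$ I must show that the reduction ideal $\mathcal I(c) = \{T \in \pi(G)' \mid Tc \in B^1(G,\pi)\}$ is dense in $\pi(G)'$. The first move is to get rid of invariant vectors. Let $p \in \pi(G)' \cap \pi(G)''$ be the (automatically central) projection onto the space of $\pi$-invariant vectors. Since a coboundary for the trivial representation $p\pi$ is zero, the continuous map $b \mapsto pb$ sends $B^1(G,\pi)$ to $\{0\}$, whence $pc = 0$; moreover $p\pi(G)' \subseteq \mathcal I(c)$, and $(1-p)\pi(G)''$ is a corner of a finite von Neumann algebra by a central projection, hence still finite. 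Therefore it suffices to prove that the reduction ideal of $c$, now regarded as a $1$-cocycle for $(1-p)\pi$, is dense in $((1-p)\pi)(G)'$, and after renaming I may assume from the start that $\pi$ has no non-zero invariant vectors.

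Now I bring in the square-integrable machinery. By the lemma preceding Proposition~\ref{polar decomposition} there is an adapted probability measure $\mu$ on $G$ with $c \in \overline{B}^1_\mu(G,\pi)$. Let $\partial_\mu : H \to Z^1_\mu(G,\pi)$ be the coboundary operator, with polar decomposition $\partial_\mu = U_\mu \Delta_\mu^{1/2}$ as in Proposition~\ref{polar decomposition}; since $\pi$ has no invariant vectors, $\Delta_\mu$ and hence $\Delta_\mu^{1/2}$ are injective, $\Delta_\mu^{1/2} \in \pi(G)''$, and $U_\mu$ is an isometry of $H$ onto $\overline{B}^1_\mu(G,\pi)$. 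The crucial point is that $U_\mu$ is $\pi(G)'$-equivariant, where $\pi(G)'$ acts pointwise on cocycles: $\partial_\mu$ clearly intertwines any $T \in \pi(G)'$ with its pointwise action, and so does $\Delta_\mu^{1/2} \in \pi(G)''$, so comparing $U_\mu \Delta_\mu^{1/2} T = T U_\mu \Delta_\mu^{1/2}$ on the dense range of $\Delta_\mu^{1/2}$ gives $U_\mu T = T U_\mu$. Writing $c = U_\mu \xi$ with $\xi \in H$, we obtain $Tc = U_\mu(T\xi)$ for every $T \in \pi(G)'$, so by the last assertion of Proposition~\ref{polar decomposition},
$$ \mathcal I(c) = \{ T \in \pi(G)' \mid Tc \in B^1(G,\pi) \} = \{ T \in \pi(G)' \mid T\xi \in \ran(\Delta_\mu^{1/2}) \}. $$

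It remains to apply Lemma~\ref{approximation finite} to the finite von Neumann algebra $M = \pi(G)''$, its commutant $M' = \pi(G)'$, the injective operator $\Delta_\mu^{1/2} \in \pi(G)''$, and the vector $\xi$: it asserts precisely that the left ideal on the right-hand side above is dense in $\pi(G)'$. Hence $c$ is evanescent, which proves $\overline{B^1(G,\pi)} \subseteq \mathcal E(G,\pi)$ and finishes the argument. I expect the only subtle step to be the identification of $\mathcal I(c)$ — one has to check that the pointwise $\pi(G)'$-action is a well-defined bounded action on $Z^1_\mu(G,\pi)$, that $\partial_\mu$ and $\Delta_\mu^{1/2}$ are equivariant for it, and that equivariance survives the polar decomposition — after which Lemma~\ref{approximation finite} does all the work; the initial reduction to representations without invariant vectors is routine.
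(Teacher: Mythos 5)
Your proof is correct and follows essentially the same route as the paper: choose an adapted measure $\mu$ with $c \in \overline{B}^1_\mu(G,\pi)$, write $c = U_\mu \xi$ via Proposition~\ref{polar decomposition}, and apply Lemma~\ref{approximation finite} to the injective operator $\Delta_\mu^{1/2} \in \pi(G)''$ to get a dense (even increasing sequence of projections in the) reduction ideal. The only difference is that you make explicit two points the paper leaves implicit --- the preliminary reduction to the case where $\pi$ has no invariant vectors and the $\pi(G)'$-equivariance of $U_\mu$ giving $Tc = U_\mu(T\xi)$ --- which is a clarification, not a different argument.
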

\begin{proof}
Take $c \in \overline{B^1(G,\pi)}$. We can find an adapted probability measure $\mu$ on $G$ such that $c=U_\mu \xi$ for some $\xi \in H$. By Lemma \ref{approximation finite}, we can find an increasing sequence $p_n \in \pi(G)'$ with $\lim_n p_n=1$ such that $p_n\xi$ is in the range of $\Delta_\mu^{1/2}$ for all $n \in \N$. This means that $p_n c \in B^1(G,\pi)$ for all $n \in \N$ and we conclude that $c$ is evanescent.
\end{proof}

\begin{prop} \label{finite implies module}
Let $G$ be a locally compact group and let $\pi$ be a representation such that $\pi(G)'$ is a finite von Neumann algebra. Then $\mathcal{E}(G,\pi)$ is a $\pi(G)'$-module.
\end{prop}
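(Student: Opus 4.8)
The plan is to verify directly that $\mathcal{E}(G,\pi)$ is closed under addition and under the left action of $M := \pi(G)'$; since $M$ is unital and contains the scalars, this is exactly what it means for $\mathcal{E}(G,\pi)$ to be a $\pi(G)'$-submodule of $Z^1(G,\pi)$. Throughout, I would phrase everything in terms of the reduction ideals $\mathcal{I}(c) \subseteq M$, using the fact that $c$ is evanescent precisely when $\mathcal{I}(c)$ is a \emph{dense} left ideal of the finite von Neumann algebra $M$, and feeding the two dense-ideal lemmas for finite von Neumann algebras (Lemmas \ref{dense ideal intersection} and \ref{dense ideal finite}) into the appropriate inclusions.

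For stability under the action of $M$: given an evanescent cocycle $c$ and $a \in M$, I would first record the identity $\mathcal{I}(ac) = \{ T \in M \mid Ta \in \mathcal{I}(c) \}$, which is immediate since $T(ac) = (Ta)c$ as cocycles and $\mathcal{I}(c)$ depends only on whether an element lands in $B^1(G,\pi)$. Since $\mathcal{I}(c)$ is a dense left ideal of the finite von Neumann algebra $M$, Lemma \ref{dense ideal finite} applies with $I = \mathcal{I}(c)$ and shows that $\{ T \in M \mid Ta \in \mathcal{I}(c)\}$ is dense in $M$. Hence $\mathcal{I}(ac)$ is dense, i.e.\ $ac \in \mathcal{E}(G,\pi)$.

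For stability under addition: given evanescent cocycles $c_1, c_2$, I would use that $B^1(G,\pi)$ is a linear subspace, so any $T$ with $Tc_1, Tc_2 \in B^1(G,\pi)$ also satisfies $T(c_1 + c_2) = Tc_1 + Tc_2 \in B^1(G,\pi)$; that is, $\mathcal{I}(c_1) \cap \mathcal{I}(c_2) \subseteq \mathcal{I}(c_1+c_2)$. Because $\mathcal{I}(c_1)$ and $\mathcal{I}(c_2)$ are dense left ideals of the finite von Neumann algebra $M$, Lemma \ref{dense ideal intersection} gives that $\mathcal{I}(c_1)\cap\mathcal{I}(c_2)$ is still dense, so $\mathcal{I}(c_1+c_2)$ contains a dense left ideal and is therefore dense; thus $c_1 + c_2 \in \mathcal{E}(G,\pi)$. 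Together with $0 \in B^1(G,\pi) \subseteq \mathcal{E}(G,\pi)$, this shows $\mathcal{E}(G,\pi)$ is a $\pi(G)'$-module.

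There is no real obstacle internal to this proposition: the argument is just the combination of the elementary identities $\mathcal{I}(ac) = \{T : Ta \in \mathcal{I}(c)\}$ and $\mathcal{I}(c_1)\cap\mathcal{I}(c_2) \subseteq \mathcal{I}(c_1+c_2)$ with the two dense-ideal lemmas. The substantive point — where finiteness of $M$ is genuinely used — is already isolated in Lemmas \ref{dense ideal intersection} and \ref{dense ideal finite}, which rely on a faithful normal trace and the modular identity $\tau(p)+\tau(q) = \tau(p\vee q)+\tau(p\wedge q)$ for projections; without finiteness the intersection of two dense left ideals can fail to be dense, which is exactly the phenomenon Theorem \ref{not unique decomposition} exploits for non-WIE groups.
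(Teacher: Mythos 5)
Your proposal is correct and follows essentially the same route as the paper's own proof: the inclusion $\mathcal{I}(c_1)\cap\mathcal{I}(c_2)\subseteq\mathcal{I}(c_1+c_2)$ combined with Lemma \ref{dense ideal intersection} for additivity, and the identity $\{T \mid Ta\in\mathcal{I}(c)\}\subseteq\mathcal{I}(ac)$ combined with Lemma \ref{dense ideal finite} for stability under the action of $\pi(G)'$. The only difference is cosmetic: you note the equality $\mathcal{I}(ac)=\{T \mid Ta\in\mathcal{I}(c)\}$ where the paper only uses the inclusion, which changes nothing.
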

\begin{proof}
Take $c_1,c_2 \in \mathcal{E}(G,\pi)$. Then $\mathcal{I}(c_1) \cap \mathcal{I}(c_2) \subset \mathcal{I}(c_1+c_2)$. By Lemma \ref{dense ideal intersection}, this implies that $c_1+c_2$ is evanescent. This shows that $\mathcal{E}(G,\pi)$ is a vector space. Now, take $c \in \mathcal{E}(G,\pi)$ and $a \in \pi(G)'$. Then $\{ x \in \pi(G)' \mid xa \in \mathcal{I}(c) \} \subset \cI(ac)$ and Lemma \ref{dense ideal finite} shows that $a c$ is still in $\mathcal{E}(G,\pi)$. We conclude that $\mathcal{E}(G,\pi)$ is a $\pi(G)'$-module.
\end{proof}

\begin{proof}[Proof of Theorem \ref{main finite}]
The theorem follows directly from Proposition \ref{module unique decomposition}, Proposition \ref{finite implies evanescent} and Proposition \ref{finite implies module}.
\end{proof}

\begin{proof}[Proof of Theorem \ref{not unique decomposition}]
Depending on wether $\pi$ is real or complex, we fix $\K=\R$ or $\K=\C$. Let $H$ be the Hilbert space on which $\pi$ acts. Define the representation $\rho=\pi \otimes \id$ acting on $H \otimes \ell^2_{\K}(\Z)$. This will be the linear part of the action $\alpha$ that we will construct. Let $\mu$ be an adapted probability measure on $G$ and let $\Delta \in \pi(G)''$ be the corresponding laplacian operator, i.e.\ 
$$\Delta=\int_G | \id - \pi(g) |^2 \rd \mu(g) = 2 \int_G (\id-\pi(g)) \rd \mu(g). $$
Let $T=\Delta^{1/2}$. Since $\pi$ has almost invariant vectors but no invariant vectors, we know that $T$ is injective but contains $0$ in its spectrum. Thus, we can find an orthonormal set $(e_n)_{n \in \Z}$ in the domain of $T^{-1}$ such that 
$$ \sum_{n \in \Z} n^2\| Te_n\|^2 < +\infty.$$
 Now, set 
$$\xi=\sum_{n} nTe_n \otimes \delta_n \in H \otimes \ell^2_\K(\Z).$$
\begin{newclaim}
Take $S \in \B(\ell^2_\K(\Z))$. Then $(\id \otimes S)\xi$ is in the range of $T \otimes \id$ if and only if
$$ \sum_i i^2 \| S \delta_i\|^2 < +\infty.$$
\end{newclaim}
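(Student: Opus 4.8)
The strategy is to peel off the $\ell^2_\K(\Z)$-direction one coordinate at a time, reducing the two-variable range condition to a one-dimensional statement about when a series $\sum_n a_n Te_n$ lies in $\ran(T)$, and then to prove that statement by pairing against the vectors $T^{-1}e_n$ (which are available precisely because the $e_n$ were chosen inside $\dom(T^{-1})$) in order to recover the coefficients $a_n$.

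\emph{Reduction to slices.} Write $(\id\otimes S)\xi=\sum_n nTe_n\otimes S\delta_n$ and expand $S\delta_n=\sum_i s_{in}\delta_i$ with $s_{in}=\langle S\delta_n,\delta_i\rangle$; the component of $(\id\otimes S)\xi$ along $\delta_i$ is then the vector $v_i:=\sum_n ns_{in}Te_n\in H$ (this series is the image of the convergent series $\xi=\sum_n nTe_n\otimes\delta_n$ under the bounded maps $\id\otimes S$ and $\eta\mapsto\eta_i$, hence converges). Since $T$ is bounded and injective, a vector $\eta=\sum_i\eta_i\otimes\delta_i$ of $H\otimes\ell^2_\K(\Z)$ belongs to $\ran(T\otimes\id)$ if and only if $\eta_i\in\ran(T)$ for all $i$ and $\sum_i\|T^{-1}\eta_i\|^2<+\infty$ (the only candidate preimage is $\sum_i(T^{-1}\eta_i)\otimes\delta_i$, by injectivity of $T\otimes\id$, and it lies in $H\otimes\ell^2_\K(\Z)$ exactly under that summability). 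Thus the claim reduces to showing that ``$v_i\in\ran(T)$ for all $i$ and $\sum_i\|T^{-1}v_i\|^2<+\infty$'' is equivalent to $\sum_i i^2\|S\delta_i\|^2<+\infty$.

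\emph{The one-variable lemma.} I will prove: if $(e_n)$ is orthonormal with $e_n\in\dom(T^{-1})$ for all $n$, and $(a_n)$ are scalars such that $\eta:=\sum_n a_nTe_n$ converges in $H$, then $\eta\in\ran(T)$ iff $\sum_n|a_n|^2<+\infty$, and then $T^{-1}\eta=\sum_n a_ne_n$, so $\|T^{-1}\eta\|^2=\sum_n|a_n|^2$. The ``if'' direction is immediate: $\sum_n a_ne_n$ converges by orthonormality and $T$ is continuous. For ``only if'', set $g_m:=T^{-1}e_m$ (defined since $e_m\in\dom(T^{-1})$; here $T=\Delta^{1/2}$ is positive self-adjoint, so $Tg_m=e_m$). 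Then $\langle Te_n,g_m\rangle=\langle e_n,Tg_m\rangle=\delta_{nm}$, so taking the limit over finite partial sums gives $\langle\eta,g_m\rangle=a_m$ for every $m$. If moreover $\eta=Tw$, then $a_m=\langle Tw,g_m\rangle=\langle w,Tg_m\rangle=\langle w,e_m\rangle$, and Bessel's inequality gives $\sum_m|a_m|^2=\sum_m|\langle w,e_m\rangle|^2\le\|w\|^2<+\infty$; finally $Tw=\eta=T(\sum_n a_ne_n)$ and injectivity of $T$ force $w=\sum_n a_ne_n$.

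\emph{Conclusion.} Applying the lemma with $a_n=ns_{in}$, for each fixed $i$, yields: $v_i\in\ran(T)$ iff $\sum_n n^2|s_{in}|^2<+\infty$, and then $\|T^{-1}v_i\|^2=\sum_n n^2|s_{in}|^2$. Hence the two conditions from the reduction hold simultaneously iff $\sum_{i,n}n^2|s_{in}|^2<+\infty$, which, since $\|S\delta_n\|^2=\sum_i|s_{in}|^2$, equals $\sum_n n^2\|S\delta_n\|^2=\sum_i i^2\|S\delta_i\|^2$, exactly the asserted condition. The one nonroutine point is the ``only if'' direction of the lemma: since $T$ is not diagonal in the basis $(e_n)$, the coefficients $a_m$ are not directly visible in $\eta$, and the device $g_m=T^{-1}e_m$ is what extracts them; the ensuing identity $a_m=\langle w,e_m\rangle$ is precisely what turns membership in $\ran(T)$ into square-summability via Bessel. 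Everything else is bookkeeping with the tensor decomposition, and the real and complex cases are treated identically.
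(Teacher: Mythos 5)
Your proof is correct and is essentially the paper's argument: in both, the decisive step is to recover the coefficients of a candidate preimage by pairing against the vectors $T^{-1}e_n$ (available because $e_n\in\dom(T^{-1})$) and then to apply Bessel's inequality, while the converse direction is the same explicit construction of the preimage. Your slice-wise reduction to a one-variable lemma is just a reorganization of the paper's direct computation of the matrix coefficients $\langle e_i\otimes\delta_j,\zeta\rangle$.
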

\begin{proof}[Proof of the claim]
Suppose that $(\id \otimes S)\xi=(T \otimes \id)\zeta$ for some $\zeta \in H \otimes \ell_\K^2(\Z)$. Then for all $i,j \in \Z$, we have the following computation :
\begin{align*}
\langle e_i \otimes \delta_j,\zeta \rangle & =   \langle (T^{-1}e_i) \otimes \delta_j, (T \otimes \id) \zeta \rangle \\ 
& =   \langle (T^{-1}e_i) \otimes \delta_j, (\id \otimes S) \xi \rangle \\ 
& = \sum_{n} \langle (T^{-1}e_i) \otimes \delta_j, nTe_n \otimes S\delta_n \rangle \\ 
& = \sum_{n} n \langle e_i , e_n \rangle  \langle \delta_j,   S\delta_n \rangle \\ 
& =i\langle \delta_j,S\delta_i \rangle.
\end{align*}
It follows that
$$ \sum_i i^2 \| S \delta_i\|^2=\sum_{i,j} i^2|\langle \delta_j,S\delta_i \rangle|^2 =\sum_{i,j} |\langle e_i \otimes \delta_j,\zeta \rangle|^2=  \| \zeta \|^2 < +\infty.$$
Conversely, if
$$ \sum_i i^2 \| S \delta_i\|^2 < +\infty$$
then we can define a vector $\zeta \in H \otimes \ell^2_\K(\Z)$ such that $(\id \otimes S)\xi=(T \otimes \id) \zeta$ by the formula
$$ \zeta=\sum_{i,j} i \langle \delta_j,S\delta_i\rangle (e_i \otimes \delta_j).$$
\end{proof}
Now, we can construct the desired affine isometric action. Let $\partial : H \otimes \ell^2_\K(\Z) \rightarrow Z^1_\mu(G,\rho)$ be the coboundary operator and let $U : H \otimes \ell^2_\K(\Z) \rightarrow Z^1_\mu(G,\rho)$ be the isometry obtained from its polar decomposition $\partial =U T$. Let $c=U\xi \in Z^1_\mu(G,\rho)$. Then $c \in \overline{B^1(G,\rho)}$ and thanks to the claim and Proposition \ref{polar decomposition} we know that for $S \in \B(\ell^2_\K(\Z))$, we have $(\id \otimes S)c \in B^1(G,\rho)$ if and only if 
$$ \sum_{n \in \Z} n^2 \| S \delta_n\|^2 < +\infty.$$
Let us now prove that the affine isometric action $\alpha$ associated to $\rho$ and $c \in Z^1(G,\rho)$ satisfies the properties $\rm (ii)$, $\rm (iii)$ and $\rm (iv)$.

Proof of $\rm (ii)$. For all $n \in \N$, let $P_n \in \B(\ell^2_\K(\Z))$ be the projection on the finite dimensional subspace $\Vect (\delta_0,\dots,\delta_n)$. Then $P_n$ is increasing, converges to $\id$ and $(\id \otimes P_n) c \in B^1(G,\rho)$ for all $n \in \N$, hence $\alpha$ is evanescent.

Proof of $\rm (iii)$. Define the vector $$v =\sum_{n > 0} \frac{1}{n} \delta_n \in \ell^2_\K(\Z).$$ Let $P$ be the rank-one projection on $v$. Consider the decomposition $\alpha = \beta_0 \oplus \beta_1$ where $\beta_0$ is the projected action associated to $\id \otimes P$ and $\beta_1$ is the projected action associated to $\id \otimes P^{\perp}$. Observe that $$\sum_{n \in \Z} n^2 \| P\delta_n\|^2=+\infty.$$ Thus $(\id \otimes P)c$ is not a coboundary, i.e.\ $\beta_0$ has no fixed point. Since the linear part of $\beta_0$ is $\pi$, which is irreducible, it follows that $\beta_0$ is an irreducible affine isometric action. Let us show that $\beta_1$ is evanescent. We can find an orthonormal basis $(b_n)_{n \in \Z}$ of $\{ f \in \ell^2_\K(\Z) \mid \langle f,v\rangle=0 \}$ such that for every $n \in \N$, the vector $b_n$ is finitely supported as a function on $\Z$. Let $Q_n$ be the projection on $\Vect (b_0,\dots, b_n)$ for all $n \in \N$. Then we have 
$$ \sum_{k \in \Z} k^2 \| Q_n \delta_k\|^2 < +\infty$$
because this sum is finite. This means that $(\id \otimes Q_n)c$ is a coboundary for all $n \in \N$. Since $\lim_n Q_n=P^{\perp}$, we conclude that $\beta_1$ is evanescent.

Proof of $\rm (iv)$. Let $\T=\{ z \in \C \mid |z|=1\}$. For every $f \in \ell^2_\K(\Z)$, let $\hat{f} \in \rL^2(\T)$ be its Fourrier transform. Then $f \mapsto \hat{f}$ is an isometry. If $\K=\C$, it is surjective, if $\K=\R$ its range is $\{ h \in \rL^2(\T) \mid \forall z \in \T, \; h(\bar{z})=\overline{h(z)} \}$. Let $\T=A_1 \sqcup A_2$ be a measurable partition of $\T$ such that for every $i \in \{1,2\}$ and every nonempty open subset $V \subset \T$, the set $A_i \cap V$ has positive measure. If $\K=\R$, assume also that $A_i$ is invariant under complex conjugation. Let $E_i=\{ f \in \ell^2_\K(\Z) \mid \hat{f} \text{ vanishes a.e.\ on } A_i \}$. Then $\ell^2_\K(\Z)=E_1 \oplus E_2$. Let $P_i$ be the orthogonal projection onto $E_i$. Let $\alpha_i$ be the projected action of $\alpha$ associated to $\id \otimes P_i$. Then $\alpha=\alpha_1 \oplus \alpha_2$ and we claim that $\alpha_i, i=1,2$ is irreducible. Indeed, suppose, without loss of generality, that $\alpha_1$ is not irreducible. Then there exists a nonzero orthogonal projection $Q \in \B(\ell^2_\K(\Z))$ with $Q \leq P_1$ such that $(\id \otimes Q)c$ is a coboundary, or equivalently, such that
$$ \sum_{n \in \Z} n^2 \| Q \delta_n \|^2 < +\infty $$
Let $f \in \ell^2_\K(\Z)$ be a function in the range of $Q$. Then we have 
$$ \sum_n n^2 |f(n)|^2 < +\infty.$$
This implies, in particular, that $\widehat{f}$ is a continuous function on $\T$. Since $Q \leq P_1$, we have $f \in E_1$, hence $\widehat{f}$ vanishes almost everywhere on $A_1$. Thanks to the continuity of $\widehat{f}$ and our assumption on $A_1$, this forces $\widehat{f}=0$, hence $f=0$. This contradicts the fact that $Q \neq 0$. We conclude that $\alpha_1$ is irreducible, as we wanted.
\end{proof}

\begin{proof}[Proof of Theorem \ref{equivalence WIE}]
We observe that $(\rm iii) \Rightarrow (\rm i)$ follows from Theorem \ref{not unique decomposition} while $(\rm ii) \Rightarrow (\rm iii)$ follows from Proposition \ref{module unique decomposition} because $(\rm ii)$ implies that $\mathcal{E}(G,\pi)=\overline{B^1(G,\pi)}$ is a $\pi(G)'$-module for every representation $\pi$. Therefore, we just have to prove that $\rm (i) \Rightarrow \rm (ii)$. 

Assume that $G$ is WIE. Let $\alpha$ be an affine isometric action of $G$ on $H$ that has almost fixed points. Let its linear part be $\pi$ and the associated cocycle $c \in Z^1(G,\pi)$. We will use the theory of direct integral decompositions of representations. For an introduction to this theory, we refer to \cite[Section 1.G]{BH19}. We can find a standard probability space $(X,\mu)$, a measurable field of Hilbert spaces $(H_x)_{x \in X}$ and a measurable field of irreducible representations $(\pi_x)_{x \in X}$ such that $H=\int_X^{\oplus} H_x \rd \mu(x)$ and $\pi=\int_X^{\oplus} \pi_x \rd \mu(x)$. We can also desintegrate $c \in Z^1(G,\pi)$ into a measurable field of cocycles $c_x \in Z^1(G,\pi_x)$. Since $c \in \overline{B^1(G,\pi)}$, we have $c_x \in \overline{B^1(G,\pi_x)}$ for a.e.\ $x \in X$. Since $\pi_x$ is irreducible and $G$ is WIE we know that $\pi_x$ has no almost invariant vectors, hence $B^1(G,\pi_x)$ is closed in $Z^1(G,\pi_x)$. Thus $c_x$ is a coboundary for a.e.\ $x \in X$. Write $c_x=\partial \xi_x$ for some measurable section $\xi_x \in H_x, \: x \in X$ (note that $\xi_x$ is uniquely determined, unless $\pi_x$ is trivial, in which case we take $\xi_x=0$). Observe that if $\int_X \| \xi_x\|^2 \rd \mu(x) < +\infty$ then the section $(\xi_x)_{x \in X}$ defines an element $\xi \in H$ such that $c=\partial \xi \in B^1(G,\pi)$. However, in general we will have $\int_X \| \xi_x\|^2 \rd \mu(x) = +\infty$. In that case, we can still find an increasing sequence of measurable subsets $A_n \subset X$ such that $\bigcup_n A_n=X$ and $\int_{A_n} \| \xi_x\|^2 \rd \mu(x) < +\infty$ for all $n \in \N$. Let $\xi_n=\int_{A_n}^{\oplus} \xi_x \rd \mu(x) \in H$. Let $P_n \in \pi(G)'$ be the projection on $\int_{A_n}^{\oplus} H_x \rd \mu(x)$. Then we have $P_n c=\partial \xi_n \in B^1(G,\pi)$ for all $n$. And since $(P_n)_{n \in \N}$ increases to $1$, we conclude that $c$ is evanescent as we wanted.
\end{proof}

\bibliographystyle{plain}

\begin{thebibliography}{BHV08}

\bibitem[AIM19]{AIM19} Y. Arano, Y. Isono and A. Marrakchi, Ergodic theory of affine isometric actions on Hilbert spaces. {\it Preprint.} \href{https://arxiv.org/abs/1911.04272}{arXiv:1911.04272}





\bibitem[Be17]{Be17} {\sc B. Bekka} {\it Harmonic cocycles, von Neumann algebras and irreducible affine isometric actions.} New York J. of Math., Electronic Journals Project. {\bf 23} (2017), 1205--1218.

\bibitem[BV97]{BV97} {\sc B. Bekka and A. Valette} {\it Group cohomology, harmonic functions and
the first $\ell^2$-Betti number.} Potential analysis {\bf 6} (1997), 313--326.

\bibitem[BH19]{BH19} {\sc B. Bekka, P. de la Harpe}, {\it Unitary representations of groups, duals, and characters.} Amer. Math. Soc., 250, xi-474 p., 2020. 

\bibitem[BHV08]{BHV08} {\sc B. Bekka, P. de la Harpe, A. Valette}, {\it Kazhdan's property (T).} Cambridge University Press, 2008.

\bibitem[BPV16]{BPV16} {\sc B. Bekka, T. Pillon, A. Valette}, {\it Irreducible affine isometric actions on Hilbert spaces.} M\"unster J. of Math. {\bf 9} (2016), 1--34.
















\bibitem[DZ21]{DZ21} {\sc T. Delabie , A. Zumbrunnen}, {\it The Haagerup property and actions on von Neumann algebras.} {\tt arXiv:2103.08272}.




\bibitem[EO18]{EO18} {\sc A. Erschler, N. Ozawa}, {\it Finite-dimensional representations constructed from random walks.} Comment. Math. Helv. {\bf 93} (2018) 555--586.


































\bibitem[Ja04]{Ja04} {\sc W. Jaworski}  {\it Countable Amenable Identity Excluding Groups.} Canadian Math. Bulletin, {\bf 47(2)}, (2004) 215--228.

\bibitem[JRW96]{JRW96} {\sc  W. Jaworski, J. Rosenblatt and G. Willis}, {\it Concentration functions in locally
compact groups.} Math. Ann. 305 (1996), 673--691.

\bibitem[LW95]{LW95} {\sc  M. Lin and R. Wittmann}, {\it Averages of unitary representations and weak mixing
of random walks.} Studia Mathematica, 114 (1995), 127--145.
















































\end{thebibliography}

\end{document}